\documentclass[11pt]{article}
\usepackage{amsmath,amssymb,amsthm,mathtools}
\usepackage{hyperref}
\usepackage{enumitem}
\usepackage{geometry}
\theoremstyle{plain}
\newtheorem{theorem}{Theorem}[section]
\newtheorem{proposition}[theorem]{Proposition}
\newtheorem{lemma}[theorem]{Lemma}
\newtheorem{corollary}[theorem]{Corollary}
\theoremstyle{definition}
\newtheorem{definition}[theorem]{Definition}
\theoremstyle{remark}
\newtheorem{remark}[theorem]{Remark}

\newcommand{\F}{\mathbb{F}}
\newcommand{\E}{\mathbb{E}}
\newcommand{\TV}{\mathrm{TV}}
\newcommand{\rank}{\mathrm{rank}}
\newcommand{\Sym}{\mathrm{Sym}}
\newcommand{\inv}{\mathrm{inv}}

\newcommand{\logtwo}{\log_2}
\newcommand{\mix}{\mathrm{mix}}
\newcommand{\rad}{\mathrm{rad}}

\title{Triangular cutoff threshold for the inversion walk on tournaments\\ and the state space of restricted inversions}

\author{
    Jiangdong Ai\thanks{School of Mathematical Sciences and LPMC, Nankai University.
    {\tt jd@nankai.edu.cn}. Funded by the National Natural Science Foundation of China
    (No.12522117, No.12401456), the Natural Science Foundation of Tianjin
    (No.24JCQNJC01960) and Fundamental and Interdisciplinary Disciplines Breakthrough
    Plan of the Ministry of Education of China (JYB2025XDXM207).}
}

\date{}

\begin{document}

\maketitle

\begin{abstract}
Given a labelled tournament on $[n]$, \emph{inverting} a vertex subset $X$ means reversing
every edge with both endpoints in $X$.
Alon, Powierski, Savery, Scott, and Wilmer~\cite{AlonPowierskiSaveryScottWilmer2024} asked
for the mixing time of the Markov chain that repeatedly inverts a uniformly random subset
of $[n]$.
We show that this \emph{inversion walk} has a triangular cutoff threshold.  Let $T(s):=\binom{s+1}{2}$.  For every integer sequence $s_n\in[0,n]$,
$$
  d_n(n-s_n)\longrightarrow
  \begin{cases}
    1,& T(s_n)-n\to+\infty,\\
    0,& T(s_n)-n\to-\infty.
  \end{cases}
$$
Consequently, for every fixed $\varepsilon\in(0,1)$, $
  t_{\mix}^{(n)}(\varepsilon)=n-\sqrt{2n}+O(1).$
  
We also prove quantitative one-sided bounds, with absolute constants
$C_\star<0.36$ and $\kappa<3.47$,
$$
  d_n(n+c)\le C_\star 2^{-c}\quad(c\ge0),
  \qquad
  d_n(n-s)\ge 1-\kappa\,2^{\,n-\binom{s+1}{2}}\quad(0\le s\le n).
$$

As a second result, we characterise the state space of the \emph{$k$-restricted inversion walk},
which inverts a uniformly random $k$-subset at each step.
For $n\ge 4$ and $2\le k\le n-2$, the reachable states form a coset of a subgroup
$H_k\le\F_2^{\binom{n}{2}}$ whose defining parity constraints are determined by $k\bmod 4$; equivalently, its codimension is $0,1,n-1$, or $n$ according as $k\equiv2,0,3$, or $1\pmod4$.
\end{abstract}

\section{Introduction}

A \emph{tournament} on the vertex set $[n]=\{1,\dots,n\}$ is an orientation of the complete
graph $K_n$, for each unordered pair $\{i,j\}$ exactly one of $i\to j$ or $j\to i$ is present.
There are $2^m$ labelled tournaments, where $m=\binom{n}{2}$.
For a subset $X\subseteq[n]$, \emph{inverting $X$} means reversing all arcs with both
endpoints in $X$.
A single inversion can flip up to $\binom{|X|}{2}$ edges, so the operation is simultaneously
local (on small sets) and global (affecting $\Theta(n^2)$ edges for typical $X$).

Alon, Powierski, Savery, Scott, and Wilmer~\cite{AlonPowierskiSaveryScottWilmer2024}
initiated a systematic study of inversion distance in digraphs and tournaments.
Among other results, they proved that any labelled tournament on $[n]$ can be reached from
any other by at most $(1+o(1))n$ inversions, establishing that the \emph{inversion diameter}
is $\Theta(n)$.
They also showed that a uniformly random symmetric $\F_2$-matrix has rank $n-O(\log n)$ with
high probability (Lemma~12 of~\cite{AlonPowierskiSaveryScottWilmer2024}), and asked for the
mixing time of the natural random process that repeatedly inverts a uniformly random subset
of vertices.

The latter question fits into the rich theory of random walks on groups.
Encoding tournaments relative to a fixed reference identifies the state space with the abelian
group $G=\F_2^m$, so the inversion walk is a \emph{Cayley walk}, at each step, a random group
element is drawn from a symmetric generating multiset and added to the current state.
For Cayley walks on abelian groups, Fourier analysis on $G$ reduces mixing to an estimate of
spectral gaps, which in turn reduce to character sums.
The distinctive feature here is that the generating distribution arises from quadratic functions on $\F_2^n$ (one for each clique), giving rise to Gauss-type character sums over
$\F_2$.
This connects the mixing problem to the theory of quadratic forms over $\F_2$ and the rank
of alternating bilinear forms.

Note that the state space has size $2^m=2^{\Theta(n^2)}$,
so a naive random walk that moves one edge at a time (the lazy version of the $k=2$ case;
see Proposition~\ref{hypercube}) mixes in $\Theta(m\log m)=\Theta(n^2\log n)$ steps.
By contrast, the full inversion walk mixes in $\Theta(n)$ steps, achieving a substantial polynomial-factor
speed-up by exploiting the high-dimensional structure of clique-induced inversions.

The \emph{inversion walk} $W_n$ is the Markov chain on labelled
tournaments on $[n]$ defined by, from the current tournament $T$, choose $X\subseteq[n]$
uniformly among all $2^n$ subsets and invert $X$.
(When $|X|\le 1$ the step is the identity, so the chain is automatically aperiodic.)
The stationary distribution is uniform on the $2^m$ tournaments; write $\pi$ for this
distribution and
$$
  d_n(t):=\max_{T_0}\|\mathcal{L}(W_n(t)\mid W_n(0)=T_0)-\pi\|_{\TV}
$$
for the worst-case total-variation distance at time $t$.

Let $C_\star:=\frac12\left(\sum_{j\ge1}2^{-j(2j-1)}\right)^{1/2}<0.36$,
 and $\kappa:=\prod_{j\ge1}(1-2^{-j})^{-1}<3.47.$

\begin{theorem}\label{mainthm}
Let $d_n(t)$ be the worst-case total-variation distance of the inversion walk on tournaments
on $[n]$, and let $T(s):=\binom{s+1}{2}$.
\begin{enumerate}[label=\textup{(\roman*)},leftmargin=2.6em]
\item For every integer $s\in\{0,1,\dots,n\}$, $d_n(n-s) \ge 1-\kappa\,2^{\,n-T(s)}.$

Consequently, if $s_n\in\{0,1,\dots,n\}$ and $T(s_n)-n\to+\infty$, then $d_n(n-s_n)\longrightarrow 1.$
\item If $s_n\in\{0,1,\dots,n\}$ and
$T(s_n)-n\to-\infty$, then
$d_n(n-s_n)\longrightarrow 0.
$
\item For every integer $c\ge0$,
$d_n(n+c) \le C_\star\,2^{-c}.$
\end{enumerate}
In particular, for every fixed $a\ge0$ with $a\ne\sqrt2$,
$$
  d_n\!\left(n-\lfloor a\sqrt n\rfloor\right)\longrightarrow
  \begin{cases}
    1,& a>\sqrt2,\\
    0,& 0\le a<\sqrt2.
  \end{cases}
$$
Moreover, if
$$
  \sigma_n:=\max\{s:\ T(s)\le n\}
  =\left\lfloor\frac{\sqrt{8n+1}-1}{2}\right\rfloor,
$$
then, for every fixed $\varepsilon\in(0,1)$,
$$
  n-\sigma_n-1\ \le\ t_{\mix}^{(n)}(\varepsilon)\ \le\ n-\sigma_n+1
$$
for all sufficiently large $n$.  In particular,
$$
  t_{\mix}^{(n)}(\varepsilon)=n-\sigma_n+O(1)=n-\sqrt{2n}+O(1).
$$
Furthermore, in the usual cutoff-window sense, the chain has cutoff at times $n-\sigma_n$
with every window $w_n$ satisfying $w_n\to\infty$ and $w_n=o(n)$.
\end{theorem}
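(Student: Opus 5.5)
The approach is to linearise the walk. After $t$ steps with inverted sets $X_1,\dots,X_t$, let $v_i\in\F_2^t$ be the indicator vector of $\{r: i\in X_r\}$. Then $v_1,\dots,v_n$ are i.i.d.\ uniform in $\F_2^t$, and edge $\{i,j\}$ has been flipped exactly when $\langle v_i,v_j\rangle=1$. Writing $V$ for the $n\times t$ matrix with rows $v_i$, the displacement $W_n(t)-W_n(0)\in\F_2^m$ is therefore the off-diagonal part of the symmetric Gram matrix $B=VV^{\mathsf T}$. The walk is a Cayley walk, so $d_n(t)$ does not depend on $T_0$. Every step below is about the law of $\mathrm{offdiag}(VV^{\mathsf T})$.

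\emph{Lower bound (i).} The matrix $B$ has rank at most $t=n-s$. The support of the time-$t$ law is contained in the set of off-diagonal parts of symmetric $n\times n$ matrices of rank $\le n-s$. I would use the standard count of symmetric matrices over $\F_2$ of given rank (MacWilliams' formula). It bounds the number of symmetric matrices of rank $\le n-s$ by $\kappa\,2^{\binom{n+1}{2}-T(s)}=\kappa\,2^{m+n-T(s)}$; the product $\prod(1-2^{-j})^{-1}$ absorbs the Gaussian-binomial factors. Since a measure supported on a set $A$ has TV distance at least $1-|A|/2^m$ from uniform, this gives $d_n(n-s)\ge 1-\kappa 2^{n-T(s)}$, and the first consequence follows immediately.

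\emph{Upper bound (iii).} I would use Fourier analysis on $\F_2^m$. Characters are indexed by graphs $S$ on $[n]$. One step contributes $\E(-1)^{Q_S(x)}$, where $Q_S(x)=\sum_{ij\in S}x_ix_j$ and $x$ is uniform in $\F_2^n$. The polar form of $Q_S$ is the alternating adjacency matrix $A_S$, so $|\E(-1)^{Q_S(x)}|\in\{0,2^{-\rank(A_S)/2}\}$. Feeding this into the $L^2$ bound gives
$$4d_n(t)^2\le\sum_{h\ge1}N_{\mathrm{alt}}(n,2h)\,2^{-2ht},$$
where $N_{\mathrm{alt}}(n,2h)$ is the number of alternating matrices of rank $2h$. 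This count is at most $2^{h(2n-2h-1)}$ up to the exact product formula. At $t=n+c$ the sum is at most $\sum_h 2^{-2hc-h(2h-1)}\le 2^{-2c}\sum_{j\ge1}2^{-j(2j-1)}$. Taking square roots gives $C_\star 2^{-c}$; only the bookkeeping of the exact counting constant is needed here.

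\emph{Upper bound (ii), the main obstacle.} Below time $n$ the $L^2$ bound is useless, because the law is far from flat on its support. Instead I would compare $\mathrm{offdiag}(VV^{\mathsf T})$ with $\mathrm{offdiag}(B')$, where $B'$ is uniform among symmetric matrices of rank $\le t$ with the correct realisability type. For $r<t$, essentially every symmetric matrix of rank $r$ over $\F_2$ is a Gram matrix $VV^{\mathsf T}$ with $V\in\F_2^{n\times t}$. The number of representations depends only on the rank and on whether the diagonal is zero (an orbit count under $O_t(\F_2)$ and $GL$). Hence the law of $B$ is a mixture of uniform laws on rank classes. The distribution of $\rank B$ concentrates at $t-O(1)$, by the same counting as in (i).

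It then suffices to show that $\mathrm{offdiag}$ of a uniform symmetric matrix of rank exactly $r\approx t$ is close to uniform on $\F_2^m$. Fix a uniform off-diagonal pattern $E$. Let $Z(E)$ be the number of diagonals $D\in\F_2^n$ with $\rank(E+D)=r$. Then $\E Z=2^n\cdot\Pr[\rank=r]\asymp 2^{n-T(s)}\to\infty$ when $T(s)-n\to-\infty$. A second-moment bound $\E Z^2\le(1+o(1))(\E Z)^2$ would show that $Z(E)/\E Z\to1$ in probability, which is exactly TV-closeness of the pushforward to uniform. For $\E Z^2$ I would split over pairs $(D,D')$ by the Hamming weight of $D+D'$. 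Pairs of small weight, where the two ranks are strongly correlated, contribute $O(\E Z)=o((\E Z)^2)$. For larger weight I would control the joint rank probability by conditioning on the common principal submatrix where $D=D'$ and using Schur complements. This decorrelation estimate is the delicate step.

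Finally, the $\sqrt{2n}$ corollaries and the window statements follow by elementary inspection of $T(s)$ versus $n$. At $s=\sigma_n\pm1$ one has $T(s)-n=\pm\Theta(\sqrt n)$, and (i)–(iii) convert this into the stated bounds on $t_{\mix}^{(n)}(\varepsilon)$.
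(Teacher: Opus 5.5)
\textbf{Parts (i) and (iii).} Your arguments here are the paper's. For (i), the time-$(n-s)$ law lives on off-diagonal parts of symmetric matrices of rank at most $n-s$, and there are at most $\kappa\,2^{m+n-T(s)}$ of these. For (iii), you use the $L^2$ bound together with a crude count of all alternating forms of rank $2h$.

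\textbf{The gap is in (ii).} This is the sharp half of the theorem, and what you give there is a programme rather than a proof. Everything rests on $\E Z^2\le(1+o(1))(\E Z)^2$. That in turn needs decorrelation of $\rank(M+D)$ and $\rank(M+D')$ when the diagonals differ in a bulk number of coordinates, and you explicitly leave this open. The mixture reduction also needs more than you state. Take a rank class $t-j$ with $T(s+j)-n\not\to-\infty$, for example $s=\sigma_n$, $j=1$, with $n$ midway between triangular numbers. Its off-diagonal image has vanishing density, so it is \emph{not} close to uniform. You would have to show separately that such classes carry negligible weight under the law of $VV^{\mathsf T}$.

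\textbf{The missing idea.} The detour is unnecessary, because its motivating claim is wrong. The $L^2$ bound fails below time $n$ only when, as in your (iii), you sum over all alternating forms. You already noted $|\lambda_A|\in\{0,2^{-r(A)/2}\}$; the fix is to sum only over $A$ with $\lambda_A\neq0$.
\begin{itemize}
\item $\lambda_A\neq0$ happens iff $q_A$ vanishes on the radical $R$ of its polar form. Since $q_A|_R$ is linear, this imposes $n-r$ linear conditions and cuts the count by roughly $2^{-(n-r)}$.
\item The paper makes this exact. It identifies such $A$ with spanning $n$-tuples of singular vectors in a non-degenerate quadratic space of dimension $r$, modulo the free action of $O^\pm(r,2)$. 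Using $|O^\pm(r,2)|\ge 2^{\binom{r}{2}}$, this gives $N_{n,r}\le 2\bigl(2^{r-1}(1+2^{-r/2})\bigr)^n2^{-\binom{r}{2}}$.
\item At $t=n-s$, the $r$-th term of $\sum_r N_{n,r}2^{-rt}$ is at most $2^{-n+rs-\binom{r}{2}+n\logtwo(1+2^{-r/2})+1}$.
\item Since $rs-\binom{r}{2}=T(s)-\binom{r-s}{2}$, the range $r>6\lceil\logtwo n\rceil$ contributes $O(2^{T(s)-n})$.
\item The range $r\le 6\lceil\logtwo n\rceil$ contributes $o(1)$, because the term $-n(1-\logtwo\tfrac{3}{2})$ dominates there.
\end{itemize}
So the plain $L^2$ method proves (ii) directly, with no second-moment argument.

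\textbf{A bookkeeping slip at the end.} At $s=\sigma_n+1$ the quantity $T(s)-n$ can be as small as $1$ (e.g.\ $n=9$), not $\Theta(\sqrt n)$, so (i) gives nothing there. The lower bound on $t_{\mix}^{(n)}(\varepsilon)$ must use $s=\sigma_n+2$, where $T(s)-n>\sigma_n$. This is why the theorem only asserts $t_{\mix}^{(n)}(\varepsilon)\ge n-\sigma_n-1$.
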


\begin{remark}
\label{rmk:window}
The lower bound in item~\textup{(i)} is a support obstruction coming from inversion balls and
a sharp rank-counting tail for random symmetric matrices over $\F_2$; importantly, it is
peaked at the triangular threshold $T(s)=\binom{s+1}{2}\approx n$.  Item~\textup{(ii)} is
the matching spectral ingredient, it counts only the non-zero Fourier eigenvalues, and uses
sharp orthogonal-group denominators in the orbit count.  Together these estimates improve the
cutoff location from the $\sqrt n$-scale statement $n-\sqrt{2n}+o(\sqrt n)$ to bounded
distance from the integer triangular threshold $n-\sigma_n$.
\end{remark}

\begin{remark}
The inversion diameter of $\Theta(n)$ from~\cite{AlonPowierskiSaveryScottWilmer2024}
matches the mixing time up to constants, but the two results are logically independent.
The diameter bounds the support of $\mu_t$, while mixing concerns
the \emph{distribution} of the chain.
The connection is made precise via the inversion-ball volume argument in Section~\ref{s:balls}.
\end{remark}

Fix $k\in\{0,1,\dots,n\}$ and consider the \emph{$k$-restricted inversion walk} $W_{n,k}$,
which inverts a uniformly random $k$-subset at each step.
In the group encoding, $W_{n,k}$ moves by adding the vectors $v_X$ with $|X|=k$, so it is
irreducible on cosets of the subgroup $H_k:=\langle v_X:\ |X|=k\rangle\le \F_2^m$.
Determining $H_k$ is the first structural step toward studying the mixing of $W_{n,k}$.

\begin{theorem}
\label{Hk-thm}
Assume $n\ge 4$ and $2\le k\le n-2$.
Define the degree-parity map $\partial:\F_2^m\to\F_2^n$ by
$\partial(F)_v=\deg_F(v)\pmod 2$, and the edge-count parity $e:\F_2^m\to\F_2$ by
$e(F)=|F|\pmod 2$.
Then
$$
H_k \;=\;
\begin{cases}
\F_2^{m}, & k\equiv 2\pmod 4,\\
\ker(e), & k\equiv 0\pmod 4,\\
\ker(\partial), & k\equiv 3\pmod 4,\\
\ker(\partial)\cap\ker(e), & k\equiv 1\pmod 4.
\end{cases}
$$
\end{theorem}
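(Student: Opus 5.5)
Here $v_X$ denotes the edge set of the clique $K_X$ in $\F_2^m$. The argument has two halves: first show that each generator $v_X$ satisfies the parity constraints listed for $k \bmod 4$, which gives $H_k\subseteq$ the stated subgroup; then produce enough elements of $H_k$ to fill that subgroup.

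\textbf{Necessity.} For $|X|=k$, every vertex of $X$ has degree $k-1$ in $K_X$ and every other vertex has degree $0$. So $\partial(v_X)=(k-1)\mathbf 1_X$, which vanishes exactly when $k$ is odd. Also $e(v_X)=\binom k2 \bmod 2$, which is $0$ exactly when $k\equiv 0,1\pmod 4$. Hence each generator lies in the claimed subgroup.

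\textbf{Sufficiency.} The key tool is symmetric differences of cliques sharing a large common part. Take $Y$ with $|Y|=k-1$ and distinct $a,b\notin Y$; this needs $k+1\le n$. Then
$$v_{Y\cup a}+v_{Y\cup b}=\text{star}(a,Y)+\text{star}(b,Y)\in H_k .$$
Given four outside vertices $a,b,c,d$ (possible when $k-1+4\le n$, i.e.\ $k\le n-3$), swap the roles of these with an element of $Y$. Concretely, take $Y=Z\cup\{y\}$, compare $Y$ with $Y'=Z\cup\{y'\}$, and add the results. This cancels most of the stars and leaves a short cycle, namely the $4$-cycle $a\,y\,b\,y'$. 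So all $4$-cycles lie in $H_k$. The $4$-cycles span exactly the cycle-space elements of even size, that is, $\ker\partial\cap\ker e$, because even cycles generate even-size Eulerian subgraphs of $K_n$ for $n\ge4$. In particular $H_k\supseteq\ker\partial\cap\ker e$ for every $k$ in this range, which settles $k\equiv1\pmod4$. For the other residues it remains to exhibit one element of $H_k$ outside each of the larger constraint sets:
\begin{itemize}[leftmargin=2em]
\item for $k\equiv3$, an element of $\ker\partial$ with odd size, which is $v_X$ itself since $\binom k2$ is odd;
\item for $k\equiv 0$, elements of $\ker e$ with $\partial$ covering all of $\{\partial x: e(x)=0\}$, namely even-sized sets with every even-size degree-parity vector; these come from $v_X$ together with differences of stars;
\item for $k\equiv2$, single edges, obtained by combining $v_X$ with the even-cycle subgroup.
\end{itemize}
Since $\ker\partial\cap\ker e$ has index $2^{n}$ in $\F_2^m$ (the image of $(\partial,e)$ is the set of even-weight vectors in $\F_2^n$, times $\F_2$), a dimension count shows that adding these elements yields the full stated subgroup.

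\textbf{Expected obstacle.} The boundary case $k=n-2$ is the hardest part. Only two outside vertices are available there, so the $4$-cycle construction above fails. I would handle it by complementation: for $|X|=n-2$, $v_X$ equals the all-ones vector minus the edges incident to the two missing vertices. Differences of such vectors are then combinations of stars on small vertex sets, and I would recover the $4$-cycles from these directly, using $n\ge4$. I would also check the small cases $n=4,5$ by hand, since that is where these counting steps are tightest.
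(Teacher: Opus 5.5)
Your argument is correct in substance, but it takes a genuinely different route from the paper. The paper gets the reverse inclusion purely by counting dimensions. The generators $v_X$ are the columns of the inclusion matrix $W_{2,k}(n)$, and Wilson's diagonal-form theorem gives $\rank_{\F_2}W_{2,k}(n)$; its hypothesis $2\le\min(k,n-k)$ is where $k\le n-2$ enters. That rank is then matched with $\dim V_k$.

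You build $H_k$ from below instead. Sums of four $k$-cliques give every $4$-cycle, so $H_k\supseteq\ker\partial\cap\ker e$. It then remains to show that $(\partial,e)(H_k)$ equals $(\partial,e)(V_k)$ inside $E\times\F_2$, where $E\le\F_2^n$ is the even-weight subspace.

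The paper's route is shorter, but it imports a nontrivial theorem and treats $\dim H_k$ as a black box. Yours is self-contained and explains the codimensions $0,1,n-1,n$ directly. It also gives explicit short representations in the generators, which is the kind of information a comparison or path argument for the mixing of $W_{n,k}$ would need.

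Three points need repair.

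\textbf{The vertex count.} Your construction uses only $Z\cup\{y,y',a,b\}$ with $|Z|=k-2$. That is $k+2$ vertices, with three outside $Y$, not four. Indeed $v_{Z\cup\{y,a\}}+v_{Z\cup\{y,b\}}+v_{Z\cup\{y',a\}}+v_{Z\cup\{y',b\}}$ is the $4$-cycle $y\,a\,y'\,b$ for every $2\le k\le n-2$. So $k=n-2$ is not an obstacle, and the complementation detour and the hand checks for $n=4,5$ are unnecessary.

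\textbf{The case $k\equiv2\pmod4$ with $k\ge6$.} The bullet fails as written here. For $C\in\ker\partial$ one has $\partial(v_X+C)=\mathbf 1_X$, which has weight $k$. So $v_X$ together with even cycles never produces a single edge; these elements span only a subspace of dimension $m-n+1$. You need the star differences $v_{Y\cup a}+v_{Y\cup b}$ here as well. For even $k$ they have $\partial=e_a+e_b$ and even size, so they give $E\times\{0\}$. Then $v_X$, with $\partial(v_X)=\mathbf 1_X\in E$ and $e(v_X)=1$, supplies $(0,1)$. Hence $(\partial,e)(H_k)=E\times\F_2$, and the dimension count gives $H_k=\F_2^m$.

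\textbf{The span of $4$-cycles.} The claim that $4$-cycles span $\ker\partial\cap\ker e$ deserves its short proof:
\begin{enumerate}
\item The triangles through a fixed vertex form a basis of $\ker\partial$. So $\ker\partial\cap\ker e$ is spanned by sums of two triangles.
\item Two triangles sharing an edge sum to a $4$-cycle.
\item Any two triangles of $K_n$ are linked by a chain of triangles in which consecutive ones share an edge.
\end{enumerate}
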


The proof combines elementary parity obstructions with a dimension computation using Wilson's
diagonal form for inclusion matrices~\cite{Wilson1990}.

The paper is organized as follows: In Section~\ref{s:setup}, we set up the group encoding and the Fourier $L^2$--TV bound.
In Section~\ref{s:spectral}, we prove the spectral estimates, including the matching triangular upper
bound on the lower side of the transition.
In Section~\ref{s:balls}, we establish the lower side via inversion balls and a sharp rank-counting bound.
In Section~\ref{s:pf-cutoff}, we combine the two estimates to obtain the triangular cutoff threshold and the bounded localization of the mixing time.
In Section~\ref{s:restr}, we prove Theorem~\ref{Hk-thm}, treat boundary cases, and show the
classical hypercube asymptotics for $k=2$.
In Section~\ref{s:open}, we collect open problems.

\section{Preliminaries and group encoding}
\label{s:setup}

Fix a reference tournament $T_{\mathrm{ref}}$ on $[n]$ (the specific choice does not matter).
Let $m=\binom{n}{2}$ and index coordinates of $\F_2^m$ by the $m$ unordered pairs
$\{i,j\}\subseteq[n]$.
Encode each tournament $T$ by $z(T)\in\F_2^{m}$, where the coordinate indexed by $\{i,j\}$
equals $1$ iff $T$ disagrees with $T_{\mathrm{ref}}$ on the orientation of $\{i,j\}$.
The map $T\mapsto z(T)$ is a bijection from the set of tournaments to $G=\F_2^m$.

For $X\subseteq[n]$, define $v_X\in\F_2^m$ by
$$
  (v_X)_{\{i,j\}}=\begin{cases}
  1,& \{i,j\}\subseteq X,\\
  0,& \text{otherwise}.
  \end{cases}
$$
Since inverting $X$ flips exactly the edges of the induced subgraph $K_n[X]$,
one checks that $z(\mathrm{inv}_X(T))=z(T)+v_X$ in $\F_2^m$.
Thus \emph{$W_n$ is a Cayley walk on $G=\F_2^m$} driven by the step distribution
$\nu=\mathrm{Unif}\{v_X : X\subseteq [n]\}$ (with multiplicity, since
$v_\emptyset=v_{\{i\}}=0$).
The support of $\nu$ generates $G$, because the steps with $|X|=2$ are exactly the
standard basis vectors of $\F_2^m$ indexed by the edges of $K_n$.  Hence the walk is
irreducible.  The stationary distribution of any irreducible Cayley walk on a finite abelian
group is uniform, so, here $\pi=\mathrm{Unif}(G)$.

Characters of $G=\F_2^m$ are $\chi_A(z)=(-1)^{\langle A,z\rangle}$ for $A\in\F_2^m$,
where $\langle\cdot,\cdot\rangle$ is the standard inner product over $\F_2$.
Since each character $\chi_A$ is a group homomorphism, a direct computation gives
$P\chi_A = \lambda_A \chi_A$, so the characters diagonalise the transition operator.
The eigenvalue corresponding to $\chi_A$ is
$$
  \lambda_A=\E_{X\sim\mathrm{Unif}(2^{[n]})}\!\big[(-1)^{\langle A,v_X\rangle}\big],\quad A\in\F_2^m,
$$
with $\lambda_0=1$ for the trivial character.
For the chain started at a fixed state $T_0$, the distribution at time $t$ satisfies
\begin{equation}
\label{fourierexp}
  \mu_t(\{T\})=\frac{1}{|G|}\sum_{A\in\F_2^m}\lambda_A^t\,(-1)^{\langle A, z(T)\rangle}(-1)^{\langle A, z(T_0)\rangle}.
\end{equation}
The standard $L^2$--TV inequality (see e.g.~\cite[Proposition~7.14]{LevinPeres2017}) then gives
\begin{equation}
\label{L2bound}
  \|\mu_t-\pi\|_{\TV}\ \le\ \frac12\sqrt{\sum_{A\neq 0}\lambda_A^{2t}}.
\end{equation}
Note that the right-hand side is independent of the starting state $T_0$, so \eqref{L2bound}
bounds the worst-case distance $d_n(t)$ directly.

\begin{definition}
\label{def:inv}
For tournaments $T,T'$ on $[n]$, the \emph{inversion distance} $\inv(T,T')$ is the minimum
number of inversions needed to transform $T$ into $T'$.
The \emph{inversion ball} of radius $t$ around $T$ is
$B_t(T):=\{T':\ \inv(T,T')\le t\}$.
\end{definition}

Note that $\inv(T,T')$ equals the minimum number of clique vectors $v_{X_1},\dots,v_{X_s}$
(repetitions allowed) with $z(T')-z(T)=\sum v_{X_i}$ in $G$.

In particular, after $t$ steps started from $T_0$, the chain $W_n$ is supported on $B_t(T_0)$.

\begin{proposition}
\label{gram-representation}
Let $X_1,\dots,X_t$ be the random subsets chosen in the first $t$ steps, and let
$M\in\F_2^{t\times n}$ be the matrix whose $\ell$-th row is the indicator vector
$\mathbf 1_{X_\ell}$.  Then, relative to the starting tournament,
$$
  z(W_n(t))-z(W_n(0))
  =\operatorname{offdiag}(M^\top M),
$$
where the right-hand side denotes the strict upper-triangular part of the symmetric Gram
matrix $M^\top M$ over $\F_2$.
\end{proposition}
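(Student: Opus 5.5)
We argue by accumulating the displacement one step at a time, using the group encoding. By the identity $z(\inv_X(T))=z(T)+v_X$ established above, induction on $t$ gives
\[
z(W_n(t))-z(W_n(0))=\sum_{\ell=1}^t v_{X_\ell}\quad\text{in } \F_2^m .
\]
So it suffices to show that, coordinatewise, this sum equals the strict upper-triangular part of $M^\top M$.

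Fix a pair $i<j$. The $\{i,j\}$-coordinate of $v_{X_\ell}$ is $1$ exactly when both $i\in X_\ell$ and $j\in X_\ell$. This is the product $M_{\ell i}M_{\ell j}$, since $M_{\ell i}=\mathbf 1[i\in X_\ell]$. Summing over $\ell$ in $\F_2$, the $\{i,j\}$-coordinate of the total displacement is
\[
\sum_{\ell=1}^t M_{\ell i}M_{\ell j}=(M^\top M)_{ij}.
\]

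The Gram matrix $M^\top M$ is symmetric, so its entries indexed by $i<j$ determine it off the diagonal. Under the indexing of coordinates by unordered pairs, these entries are exactly $\operatorname{offdiag}(M^\top M)$. The diagonal entries $(M^\top M)_{ii}=\sum_\ell M_{\ell i}$ count, mod 2, how often vertex $i$ was chosen. They correspond to no edge and are discarded.

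There is no real obstacle here. The only point to state explicitly is that addition in $\F_2^m$ is commutative. Hence the displacement depends only on the multiset of rows of $M$, so the order of the steps is irrelevant and the sum is well defined.
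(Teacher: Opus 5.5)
Your proof is correct and is essentially the paper's argument: both identify the $\{i,j\}$-coordinate of the displacement with $\sum_{\ell} M_{\ell i}M_{\ell j}=(M^\top M)_{ij}$ over $\F_2$, i.e.\ the parity of the number of steps whose chosen set contains both $i$ and $j$. The only difference is that you spell out the step $z(W_n(t))-z(W_n(0))=\sum_{\ell} v_{X_\ell}$ via the group encoding, which the paper leaves implicit in its flip-counting.
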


\begin{proof}
For an edge $\{i,j\}$, the $(i,j)$ entry of $M^\top M$ is
$\sum_{\ell=1}^t M_{\ell i}M_{\ell j}$, computed in $\F_2$.  This is exactly the parity of the
number of chosen sets $X_\ell$ containing both $i$ and $j$, i.e. the parity of the number of
times the edge $\{i,j\}$ has been flipped.
\end{proof}

\section{Spectral estimates and the triangular upper bound}
\label{s:spectral}

We compute the eigenvalues $\lambda_A$ in terms of the rank of a quadratic form.

Identify $A\in\F_2^m$ with an undirected graph $H_A$ on $[n]$ whose edge-set is the support
of $A$.  For $x\in\F_2^n$ (the indicator vector of a set $X\subseteq[n]$) define
$$
  q_A(x):=\sum_{\{i,j\}\in E(H_A)} x_i x_j \ \in\ \F_2.
$$
Since $|E(H_A[X])|\equiv q_A(x)\pmod 2$, we have $\langle A,v_X\rangle=q_A(x)$, and therefore
\begin{equation}
\label{walsh-eig}
  \lambda_A
  =2^{-n}\sum_{x\in\F_2^n}(-1)^{q_A(x)}.
\end{equation}
Thus each eigenvalue is a (normalised) Walsh--Hadamard sum of the quadratic Boolean function
$q_A:\F_2^n\to\F_2$.

The \emph{polarisation} of a quadratic function $q:\F_2^n\to\F_2$ is
$$
  B_q(x,y):=q(x+y)+q(x)+q(y)+q(0),\quad x,y\in\F_2^n.
$$
If $q$ is quadratic, then $B_q$ is a symmetric bilinear form over $\F_2$.
Moreover, for any $x\in\F_2^n$ we have $B_q(x,x)=q(0)+q(x)+q(x)+q(0)=0$,
so $B_q$ is \emph{alternating}.
Write $r(q):=\rank(B_q)$ for the rank of this alternating form, and $r(A):=r(q_A)$.
Since $B_q$ is alternating, $r(q)$ is always even.

\begin{lemma}
\label{qwbound}
For every quadratic $q:\F_2^n\to\F_2$, $
  \Big|\sum_{x\in\F_2^n}(-1)^{q(x)}\Big|\ \le\ 2^{\,n-r(q)/2}.
$
Consequently, the eigenvalues of the inversion walk satisfy $|\lambda_A|\le 2^{-r(A)/2}$.
\end{lemma}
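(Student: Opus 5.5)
The plan is to bound the square of the character sum rather than the sum itself. Write $S(q):=\sum_{x\in\F_2^n}(-1)^{q(x)}$. Substituting $y=x+h$ in the second factor gives
$$
  S(q)^2=\sum_{x,y}(-1)^{q(x)+q(y)}=\sum_{h\in\F_2^n}\sum_{x\in\F_2^n}(-1)^{q(x)+q(x+h)}.
$$
From the definition of the polarisation, $q(x)+q(x+h)=B_q(x,h)+q(h)+q(0)$. Hence
$$
  S(q)^2=\sum_{h}(-1)^{q(h)+q(0)}\sum_{x}(-1)^{B_q(x,h)}.
$$

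Next I evaluate the inner sum. Since $B_q$ is bilinear, $x\mapsto B_q(x,h)$ is a linear functional on $\F_2^n$. By orthogonality of characters, the inner sum equals $2^n$ when this functional is identically zero, that is, when $h\in\rad(B_q)$. Otherwise it equals $0$. The radical has dimension $n-r(q)$, so
$$
  S(q)^2=2^n\sum_{h\in\rad(B_q)}(-1)^{q(h)+q(0)}.
$$
Applying the triangle inequality to this sum gives $S(q)^2\le 2^n\cdot 2^{\,n-r(q)}$. Taking square roots yields $|S(q)|\le 2^{\,n-r(q)/2}$. The step needing most care is the identity $q(x)+q(x+h)=B_q(x,h)+q(h)+q(0)$, but it is immediate from the definition of $B_q$ with the roles of $x$ and $y$ taken by $x$ and $h$.

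For the consequence, I apply this bound to $q=q_A$ in \eqref{walsh-eig}. This gives $|\lambda_A|=2^{-n}|S(q_A)|\le 2^{-r(A)/2}$, as claimed.
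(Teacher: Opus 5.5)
Your proposal is correct and follows essentially the same route as the paper: you square the sum, use the polarisation identity $q(x)+q(x+h)=B_q(x,h)+q(h)+q(0)$, and kill the inner character sum off the radical. You then bound the remaining sum over $\rad(B_q)$ trivially by $2^{n-r(q)}$. The eigenvalue consequence then follows from \eqref{walsh-eig} exactly as in the paper.
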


\begin{proof}
Let $S=\sum_{x\in\F_2^n}(-1)^{q(x)}$.  Squaring and substituting $z=x+y$:
$$
  S^2=\sum_{x,y\in\F_2^n}(-1)^{q(x)+q(y)}
  =\sum_{x,z\in\F_2^n}(-1)^{q(x)+q(x+z)}.
$$
Using the definition of $B_q$, we have $q(x)+q(x+z)=B_q(x,z)+q(z)+q(0)$, so
$$
  S^2
  =(-1)^{q(0)}\sum_{z\in\F_2^n}(-1)^{q(z)}\sum_{x\in\F_2^n}(-1)^{B_q(x,z)}.
$$
The inner sum $\sum_x(-1)^{B_q(x,z)}$ is a character sum over a linear map;
it equals $2^n$ if $z\in\rad(B_q):=\{z: B_q(x,z)=0\ \forall x\}$,
and equals $0$ otherwise (since when $z\notin\rad(B_q)$ the map $x\mapsto B_q(x,z)$ is a
non-trivial linear functional, whose $\pm 1$ values cancel perfectly).  Therefore
$$
  |S|^2\le |\rad(B_q)|\cdot 2^n = 2^{n-r(q)}\cdot 2^n = 2^{2n-r(q)}.
$$
Taking square roots: $|S|\le 2^{n-r(q)/2}$.  The eigenvalue bound follows from~\eqref{walsh-eig}.
\end{proof}

The same computation gives the following standard dichotomy, which we record explicitly
because it is the key point needed to sharpen the window.  If $R_A=\rad(B_{q_A})$ and
$S_A=\sum_x(-1)^{q_A(x)}$, then the argument above gives $S_A^2=2^n\sum_{z\in R_A}(-1)^{q_A(z)}.$
The restriction $q_A|_{R_A}$ is linear, since its polar form is identically zero on $R_A$.
Hence the last sum is $0$ unless $q_A|_{R_A}\equiv0$, and is $|R_A|=2^{n-r(A)}$ otherwise.
Consequently,
$\lambda_A=0$ unless $q_A|_{R_A}\equiv0$,
 and otherwise $|\lambda_A|=2^{-r(A)/2}.$
The next lemma exploits this vanishing.

\begin{lemma}\label{nonzero-count}
For even $r\in\{2,4,\dots,n\}$, let
$N_{n,r}:=\big|\{A\in\F_2^{\binom n2}:\ r(A)=r,\ \lambda_A\ne0\}\big|$.
Then
$$
  N_{n,r}
  \le
  2^{\,n(r-1)+n\logtwo(1+2^{-r/2})-r(r-1)/2+1}.
$$
\end{lemma}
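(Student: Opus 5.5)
The plan is to fiber the count over the radical and to use the fact that, once the radical is fixed, the vanishing condition $q_A|_{R_A}\equiv0$ is \emph{linear} in $A$. The polar form $B_{q_A}$ has Gram matrix equal to the adjacency matrix of $H_A$, a symmetric zero-diagonal (i.e.\ alternating) matrix over $\F_2$. So $r(A)$ is its rank and $R_A=\ker A$.

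For a subspace $R\le\F_2^n$ of dimension $n-r$, set
$$W_R:=\{A:\ R\subseteq\ker A\},\qquad V_R:=\{A\in W_R:\ q_A(z)=0\ \forall z\in R\}.$$
Both are linear subspaces of $\F_2^m$, since for fixed $z$ the maps $A\mapsto Az$ and $A\mapsto q_A(z)=\sum_{i<j}A_{ij}z_iz_j$ are linear in $A$. By the dichotomy recorded after Lemma~\ref{qwbound}, every $A$ counted by $N_{n,r}$ lies in $V_{R_A}$, so
$$N_{n,r}\le\sum_{\dim R=n-r}|V_R|.$$
Moreover $W_R$ is identified with the alternating forms on $\F_2^n/R\cong\F_2^r$, so $|W_R|=2^{r(r-1)/2}$.

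Next I compute $|V_R|$ by a character average over the subspace $W_R$. On $W_R$ the map $z\mapsto q_A(z)$ is linear on $R$, because its polar form vanishes there. Hence
$$\mathbf 1\{q_A|_R\equiv0\}=2^{-(n-r)}\sum_{z\in R}(-1)^{q_A(z)},$$
and averaging over $A\in W_R$ gives $|V_R|=2^{r(r-1)/2-(n-r)}\,|D_R|$, where $D_R:=\{z\in R:\ q_A(z)=0\ \forall A\in W_R\}$. The sets $D_R$ are not trivial: for instance, if $e_i\in R$ then $e_i\in D_R$, because $q_A(e_i)=0$ always. This is why the naive "$n-r$ independent constraints" bound fails, and why the extra factor $(1+2^{-r/2})^n$ appears. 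Swapping sums,
$$\sum_R|D_R|=\sum_{z\in\F_2^n}\#\{R:\ \dim R=n-r,\ z\in D_R\}.$$
The target bound equals $2^{-(n-r)+r(r-1)/2}$ times $2\cdot 2^{r(n-r)}\sum_{S\subseteq[n]}2^{-|S|r/2}$. So it suffices to show, for $z$ with support $S\neq\emptyset$,
$$\#\{R\ni z:\ z\in D_R\}\ \lesssim\ 2^{r(n-r)}\,2^{-|S|r/2},$$
with the $z=0$ term contributing the Gaussian binomial $\left[{n\atop r}\right]_2$. A convenient bound is $\left[{n\atop r}\right]_2\le 2^{r(n-r)}\prod_{i\ge1}(1-2^{-i})^{-1}$, with the constant absorbed into the final factor $2^{+1}$, possibly after a slightly sharper estimate of the Gaussian binomial.

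The main obstacle is characterising when $z\in D_R$. For $z=\mathbf 1_S$, the functional $A\mapsto\sum_{\{i,j\}\subseteq S}A_{ij}$ must vanish on all alternating forms that kill $R$. Equivalently, the edge-indicator of $K_S$, viewed as an alternating form, must lie in the span of the elementary forms $u\wedge w$ with $u\in R$ (the annihilator of $W_R$ in the space of alternating forms, up to duality). I would try to show that this forces $R$ to contain a large structured subspace attached to $S$, roughly of dimension about $|S|/2$, e.g.\ a maximal isotropic-type set of vectors supported on $S$. The number of such $R$ would then be smaller than $\left[{n\atop r}\right]_2$ by a factor $2^{-r|S|/2}$, since each forced vector of $R$ costs about $2^{-r}$ in the Grassmannian count. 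If this structural claim proves awkward, a fallback is to bound $\#\{R\ni z: z\in D_R\}$ inductively on $|S|$ by peeling off one coordinate of $S$ at a time. I expect the constants here to be where the $+1$ in the exponent is used.
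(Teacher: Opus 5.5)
There is a genuine gap, and it occurs before your analysis of $D_R$: the linearised inequality $N_{n,r}\le\sum_{\dim R=n-r}|V_R|$ is already too weak to give the lemma.

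\textbf{The zero form overcounts.} $V_R$ contains every $A$ whose radical merely \emph{contains} $R$ and whose form vanishes on $R$. In particular $A=0\in V_R$ for every $R$, so $\sum_R|V_R|\ge\binom{n}{r}_2\ge 2^{r(n-r)}$. Compare exponents: $r(n-r)$ exceeds the target exponent $n(r-1)+n\logtwo(1+2^{-r/2})-r(r-1)/2+1$ by $n-\binom{r+1}{2}-n\logtwo(1+2^{-r/2})-1$. This tends to $+\infty$ as $n\to\infty$, for every fixed even $r$ and indeed uniformly over $\binom{r+1}{2}\le(1-\varepsilon)n$. Concretely, for $r=2$ the lemma reads $N_{n,2}\le 3^n$, while $\sum_R|V_R|\ge\binom{n}{2}_2=(2^n-1)(2^{n-1}-1)/3$. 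This is $2667>2187=3^7$ already at $n=7$, and grows like $4^n/6$.

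\textbf{Consequence for your per-$z$ bound.} $D_R$ is cut out of $R$ by at most $\dim W_R=\binom r2$ linear conditions, so $|D_R|\ge 2^{\,n-r-\binom r2}$ for every $R$. For $r=2$, $D_R$ has index at most $2$ in $R$. So the bound $\#\{R\ni z:\ z\in D_R\}\lesssim 2^{r(n-r)-|S|r/2}$ you hoped to prove is false on average.

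\textbf{The large-$r$ regime also fails.} Take $r,n-r\to\infty$ with $n2^{-r/2}\to0$. The $z=0$ term alone is $\binom nr_2\sim\kappa\,2^{r(n-r)}$ with $\kappa\approx3.46$, while your budget is $2\cdot2^{r(n-r)}(1+2^{-r/2})^n\sim 2\cdot 2^{r(n-r)}$. This Gaussian-binomial asymptotic is exact, so no sharper estimate lets you absorb the constant into the $+1$. The underlying loss is that you count all alternating forms on $V/R$ rather than only the non-degenerate ones; this costs a factor of roughly $2.4$ even in this favourable range.

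\textbf{What is missing.} You need to count only those $A$ with $\ker A=R$ exactly, and that destroys the linear structure your character average relies on. The paper uses a different parametrisation:
\begin{itemize}
\item $q_A$ descends to a non-degenerate quadratic form $\bar q$ on $U=V/R$.
\item The images $u_i=e_i+R$ form a spanning $n$-tuple of $\bar q$-singular vectors.
\item For a fixed isometry type, two such tuples give the same $A$ exactly when they differ by an isometry, and the orthogonal group acts freely.
\end{itemize}
Hence $N_{n,r}\le\sum_{\pm}S_\pm(r)^n/|O^\pm(r,2)|$, with $S_\pm(r)=2^{r-1}\pm2^{r/2-1}\le2^{r-1}(1+2^{-r/2})$ and $|O^\pm(r,2)|\ge2^{r(r-1)/2}$. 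The factor $(1+2^{-r/2})^n$ is just the singular-vector count raised to the $n$-th power, and the extra $+1$ in the exponent accounts for the two isometry types.

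If you repair your fibration by requiring $\ker A=R$, then for a given $R$ the admissible $A$ are in bijection with non-degenerate $\bar q$ on $V/R$ vanishing at every $u_i$. Summing over $R$ reproduces exactly this orbit count.
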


\begin{proof}
Let $V=\F_2^n$.  Suppose $r(A)=r$ and $\lambda_A\ne0$.  Let
$R=\rad(B_{q_A})$.  Since $q_A|_R\equiv0$, the quadratic form $q_A$ descends to a
non-degenerate quadratic form $\bar q$ on $U:=V/R$, with polar form $\bar B$.
Writing $u_i=e_i+R\in U$, the vectors $u_1,\dots,u_n$ span $U$, satisfy
$\bar q(u_i)=q_A(e_i)=0$, and determine the graph $A$ through
$ A_{ij}=\bar B(u_i,u_j)$ where $1\le i<j\le n.$
Conversely, any spanning $n$-tuple of singular vectors in a non-degenerate quadratic space
$(U,\bar q)$ of dimension $r$ gives such a graph. If $\phi:\F_2^n\to U$ sends $e_i$ to
$u_i$, then the resulting quadratic form is $\bar q\circ\phi$, its polar form has rank $r$,
and it vanishes on its radical $\ker\phi$.

For fixed isometry type, two spanning singular tuples give the same graph precisely when
they differ by an isometry.  Indeed, suppose $(u_i)$ and $(u'_i)$ have the same pairings
$\bar B(u_i,u_j)=\bar B(u'_i,u'_j)$.  If $\sum_i\alpha_i u_i=0$, then
$\sum_i\alpha_i u'_i$ pairs trivially with every $u'_j$; since the $u'_j$ span $U$ and
$\bar B$ is non-degenerate, $\sum_i\alpha_i u'_i=0$.  Thus the rule
$\sum_i\alpha_i u_i\mapsto\sum_i\alpha_i u'_i$ is well-defined.  It preserves $\bar B$, and
it also preserves $\bar q$ because for singular generators
$$
  \bar q\!\left(\sum_i\alpha_i u_i\right)
  =\sum_{i<j}\alpha_i\alpha_j\bar B(u_i,u_j).
$$
So it is an isometry.  Conversely, an isometry clearly preserves all pairings.  The action of
the orthogonal group on spanning tuples is free, because an isometry fixing all $u_i$ is the
identity.  Hence the number of graphs of each type is at most the number of spanning singular
$n$-tuples divided by the order of the corresponding orthogonal group, and this is at most the
number of all singular $n$-tuples divided by that order.

Write $r=2h$.  There are two isometry classes of non-degenerate quadratic forms in even
dimension $r$ over $\F_2$.  The numbers of singular vectors, including $0$, are
$$
  S_\pm(r)=2^{r-1}\pm 2^{h-1}
  \le 2^{r-1}(1+2^{-h})=2^{r-1}(1+2^{-r/2}).
$$
The standard orthogonal-group formulas over $\F_2$~\cite{Taylor1992} give
$$
  |O^\pm(2h,2)|
  =2\,2^{h(h-1)}(2^h\mp1)\prod_{i=1}^{h-1}(2^{2i}-1).
$$
Equivalently,
$$
  |O^+(2h,2)|
  =2^{r(r-1)/2}\,2(1-2^{-h})\prod_{i=1}^{h-1}(1-2^{-2i}),
$$
with the plus sign changed to $1+2^{-h}$ for $O^-(2h,2)$.  In particular
$|O^\pm(r,2)|\ge 2^{r(r-1)/2}$ for every $r\ge2$; for $h=1$ this is immediate, while for
$h\ge2$ the plus-type prefactor is at least
$$
  2(1-2^{-h})\prod_{i=1}^{h-1}(1-2^{-2i})
  \ge 2\cdot\frac34\left(1-\sum_{i\ge1}2^{-2i}\right)=1,
$$
and the minus type is larger.
Therefore
$$
\begin{aligned}
  N_{n,r}
  &\le \sum_{\sigma\in\{+,-\}} \frac{S_\sigma(r)^n}{|O^\sigma(r,2)|}  \\
  &\le 2\,\frac{\big(2^{r-1}(1+2^{-r/2})\big)^n}{2^{r(r-1)/2}}\\
  &=2^{\,n(r-1)+n\logtwo(1+2^{-r/2})-r(r-1)/2+1},
\end{aligned}
$$
as claimed.
\end{proof}

\begin{proposition}\label{preupper}
Let $s_n\in\{0,1,\dots,n\}$ be any integer sequence such that
$\binom{s_n+1}{2}-n\longrightarrow -\infty.$
Then
$d_n(n-s_n)\longrightarrow0.$
In particular, for every fixed $a\in[0,\sqrt2)$,
$d_n\!\left(n-\lfloor a\sqrt n\rfloor\right)\longrightarrow0.$
\end{proposition}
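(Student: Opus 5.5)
The plan is to feed Lemma~\ref{nonzero-count} into the $L^2$--TV bound \eqref{L2bound}. Because $\lambda_A=0$ unless $q_A|_{R_A}\equiv 0$, and otherwise $|\lambda_A|=2^{-r(A)/2}$, at time $t=n-s$ (writing $s=s_n$) we have
$$
  \sum_{A\ne0}\lambda_A^{2t}=\sum_{\substack{r=2\\ r\text{ even}}}^{n} N_{n,r}\,2^{-r(n-s)} .
$$
Note that $A\neq0$ forces $r(A)\ge2$, since a nonempty graph gives a nonzero alternating form. Lemma~\ref{nonzero-count} bounds the $r$-th term by $2^{E(r)}$, where
$$
  E(r)= -n + rs-\tfrac{r(r-1)}{2} + n\logtwo(1+2^{-r/2}) + 1 .
$$
The key algebraic identity is
$$
  rs-\tfrac{r(r-1)}{2}=T(s)-\tfrac{(r-s)(r-s-1)}{2}.
$$
It shows that, apart from the correction $n\logtwo(1+2^{-r/2})$, every term is at most $2^{T(s)-n+1}$. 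Moreover, the terms decay like $2^{-j(j-1)/2}$ in $j=r-s$. Hence $\sum_r 2^{-(r-s)(r-s-1)/2}$ is bounded by an absolute constant, uniformly in $s$.

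I will split the range of $r$ into two parts.

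\emph{Large ranks}, $r\ge 4\logtwo n$. Here $n\logtwo(1+2^{-r/2})\le n\cdot 2^{-r/2}/\ln 2\le 2/n$, so the correction is $O(1)$. This part of the sum is therefore $O(2^{T(s)-n})$.

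\emph{Small ranks}, $2\le r<4\logtwo n$. I will not use the identity here. Instead I bound $\logtwo(1+2^{-r/2})\le\logtwo(3/2)<0.59$, so
$$
  E(r)\le -0.41n + rs + 1 .
$$
Since $T(s)-n\to-\infty$ forces $s\le\sqrt{2n}$ eventually, we get $rs=O(\sqrt n\log n)$. There are only $O(\log n)$ such terms, so this part is at most $2^{-0.4n}$ for large $n$.

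Combining the two ranges gives
$$
  d_n(n-s)\le \tfrac12\sqrt{C\,2^{T(s)-n}+2^{-0.4n}}\longrightarrow 0 .
$$
The boundary cases are harmless. The case $s=n$ cannot occur, since $T(n)-n\to\infty$. When $s=0$ the bound is trivially small. For the "in particular" statement, with $s=\lfloor a\sqrt n\rfloor$ and $a<\sqrt2$ we have $T(s)=a^2n/2+O(\sqrt n)$, so $T(s)-n\le-(1-a^2/2)n+O(\sqrt n)\to-\infty$.

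The main obstacle is the small-rank regime. There the factor $(1+2^{-r/2})^n$ from the count of singular vectors is exponentially large, so the clean triangular identity alone does not suffice. One has to check that the $-n$ coming from $2^{-rt}$ with $t\approx n$ still dominates. This works because $\logtwo(3/2)<1$ at $r=2$, and because $rs$ is only $O(\sqrt n\log n)$ in this range.
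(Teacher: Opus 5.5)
Your proposal is correct and follows the paper's proof: the same ingredients appear in the same roles. These are the $L^2$--TV bound combined with the eigenvalue dichotomy and Lemma~\ref{nonzero-count}, and a split of the rank range at a threshold of order $\logtwo n$ (you take $4\logtwo n$, the paper takes $6\lceil\logtwo n\rceil$). Small ranks are handled by the crude bound $\logtwo(1+2^{-r/2})\le\logtwo(3/2)<1$ together with $rs=O(\sqrt n\log n)$, and large ranks by the identity $rs-\tfrac{r(r-1)}{2}=T(s)-\tfrac{(r-s)(r-s-1)}{2}$ and a summable series $\sum_j 2^{-j(j-1)/2}$; the differences are only in the choice of constants.
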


\begin{proof}
Write $s=s_n$, $t=n-s$, and
$\delta_n:=n-\binom{s+1}{2}\longrightarrow+\infty.$
By the $L^2$--TV bound and the eigenvalue dichotomy above,
$$
  d_n(t)^2
  \le \frac14\sum_{\substack{2\le r\le n\\ r\text{ even}}} N_{n,r}\,2^{-rt}.
$$
Using Lemma~\ref{nonzero-count}, the exponent of the $r$-th summand is at most
$$
  F_r(n)
  := -n+rs-\frac{r(r-1)}2+n\logtwo(1+2^{-r/2})+1.
$$
We show that the sum of $2^{F_r(n)}$ tends to $0$.

Let $R_n:=6\lceil\logtwo n\rceil$ and let
$L_r:=\logtwo(1+2^{-r/2})$.  Since $L_r\le L_2=\logtwo(3/2)<1$ and
$s\le\sqrt{2n}$ for all large $n$ (because $\delta_n>0$ eventually), the range
$2\le r\le R_n$ contributes at most
$$
  R_n\,2^{-(1-L_2)n+R_n\sqrt{2n}+1}=o(1).
$$
For $r>R_n$, we have $nL_r\le Cn2^{-r/2}=o(1)$ uniformly in this range.  Moreover
$$
  rs-\frac{r(r-1)}2
  =\binom{s+1}{2}-\frac{(r-s)(r-s-1)}2.
$$
The last quadratic term is non-negative for every integer $r$.  Hence, uniformly for
$r>R_n$,
$$
  F_r(n)
  \le
  -\delta_n-\frac{(r-s)(r-s-1)}2+2
$$
for all sufficiently large $n$.  Therefore
$$
  \sum_{\substack{r>R_n\\ r\text{ even}}}2^{F_r(n)}
  \le
  4\,2^{-\delta_n}\sum_{k\in\mathbb Z}2^{-k(k-1)/2}=o(1).
$$
Combining the two ranges gives $d_n(n-s_n)\to0$.
The fixed-$a$ statement follows because
$\binom{\lfloor a\sqrt n\rfloor+1}{2}-n=(a^2/2-1)n+O(\sqrt n)\to-\infty$ when
$a<\sqrt2$.
\end{proof}

\begin{lemma}
\label{alt-count}
For even $r\in\{0,2,\dots,n\}$, the number of alternating bilinear forms on $\F_2^n$ of rank $r$
(equivalently, symmetric zero-diagonal $n\times n$ $\F_2$-matrices of rank $r$) is at most
$2^{\,r(n-r)+r+\binom{r}{2}}.$
\end{lemma}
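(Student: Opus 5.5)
The plan is to count a rank-$r$ alternating matrix by how it is determined from a small amount of data: a set of $r$ pivot coordinates, the $r\times r$ block on those coordinates, and the remaining rows, which are forced by linear algebra. Let $M$ be a symmetric zero-diagonal $n\times n$ matrix over $\F_2$ of rank $r$. First, I will pick a set $I\subseteq[n]$ of $r$ indices such that the principal submatrix $M_{I,I}$ is nonsingular. For symmetric (or alternating) matrices, a rank-$r$ matrix always has a nonsingular principal $r\times r$ minor. The argument is standard: choose $r$ linearly independent rows indexed by $I$. The row space equals the column space, and symmetry then forces $M_{I,I}$ to have rank $r$.

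Second, I will write $J=[n]\setminus I$. Since the rows indexed by $I$ span the row space, there is a unique $r\times(n-r)$ matrix $C$ with $M_{J,\cdot}=C^\top M_{I,\cdot}$. Then
\[
M_{I,J}=M_{I,I}C,\qquad M_{J,J}=C^\top M_{I,I}C,
\]
so $M$ is determined by the triple $(I,M_{I,I},M_{I,J})$. The block $M_{I,J}$ is an arbitrary $r\times(n-r)$ matrix, giving at most $2^{r(n-r)}$ choices. The block $M_{I,I}$ is an $r\times r$ alternating matrix, giving at most $2^{\binom r2}$ choices. The choice of $I$ contributes at most $\binom nr$.

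Third, and this is the step I expect to be the obstacle, the crude factor $\binom nr$ is too large: the target bound allows only $2^r$ for it. To fix this I will make the choice of $I$ canonical, namely the lexicographically first set of pivot rows. This is the set of positions where the rank of the initial row segments increases, which is exactly the standard reduced row-echelon pivot data. I will then avoid paying for $I$ separately by folding it into the block data, replacing the pair $(I,M_{I,J})$ by the full $r\times n$ submatrix $M_{I,\cdot}$ of independent rows together with its pivot structure.

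Concretely, the number of rank-$r$ matrices in $\F_2^{r\times n}$ whose row space is a given $r$-dimensional space is at most $2^{r^2}$. The number of $r$-dimensional subspaces of $\F_2^n$ is at most $2^{r(n-r)}\kappa$, which is bounded by $2^{r(n-r)+2}$. The matrix $M$ is determined by its row space $W$ together with the restriction of the bilinear form to $W$, and $W$ carries a nondegenerate alternating form. That form can be described in coordinates with $\binom r2$ bits once a basis of $W$ is fixed canonically, for instance by the reduced echelon basis. This gives
\[
\#\{M\}\le \#\{W\}\cdot 2^{\binom r2}\le \prod_{i=0}^{r-1}\frac{2^n-2^i}{2^r-2^i}\,2^{\binom r2}.
\]
Finally, using $\frac{2^n-2^i}{2^r-2^i}\le 2^{n-r+1}$ for $i<r$, the Gaussian binomial is at most $2^{r(n-r)+r}$, which yields the claimed bound $2^{r(n-r)+r+\binom r2}$.

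I still need to verify that $M$ is determined by $W$ and the induced form. Since $M$ has row space $W$, $M$ is the Gram matrix $M=E^\top G E$, where $E$ is the echelon basis of $W$ written as an $r\times n$ matrix and $G=M_{I,I}'$ is the alternating form expressed in that basis. This holds because $M=PE$ for some $P$, and symmetry gives $P=E^\top G$ with $G$ symmetric. Moreover $G$ has zero diagonal, since $M$ is alternating and $E$ contains identity columns at the pivot positions, so $G$ is alternating. Hence $G$ has at most $2^{\binom r2}$ choices, which completes the count.
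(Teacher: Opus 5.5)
Your final argument is correct and is essentially the paper's proof in matrix language. The row space $W$ equals $R^\perp$, where $R$ is the radical, so choosing $W$ is the same as choosing $R$ (both are counted by $\binom{n}{r}_2\le 2^{r(n-r)+r}$). Your $G$ with $M=E^\top GE$ is exactly the alternating form descended to $V/R\cong\F_2^r$ via $x\mapsto Ex$.

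One correction: $G$ is not the restriction of $B$ to $W$. That restriction can even vanish, e.g.\ for the adjacency matrix of $K_{2,2}$, where $W=R$. Your verification paragraph does use the right object, and the detours through pivot sets, $2^{r^2}$ and $\kappa$ are not needed.
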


\begin{proof}
Let $V=\F_2^n$. An alternating form $B$ of rank $r$ has radical $\mathrm{rad}(B)$ of dimension $n-r$.
Choose the radical subspace $R=\mathrm{rad}(B)$: the number of $(n-r)$-dimensional subspaces of $V$
is the Gaussian binomial coefficient $\binom{n}{n-r}_2=\binom{n}{r}_2$.
Using the product formula,
$$
\binom{n}{r}_2
=\prod_{i=0}^{r-1}\frac{2^{n-i}-1}{2^{r-i}-1}
\le \prod_{i=0}^{r-1}\frac{2^{n-i}}{2^{r-i-1}}
=2^{\,r(n-r)+r}.
$$
Having fixed $R$, the form descends to an alternating form on $V/R\cong \F_2^r$.
The space of alternating bilinear forms on $\F_2^r$ has dimension $\binom{r}{2}$, hence at most $2^{\binom{r}{2}}$ choices.
Multiplying yields the claim.
\end{proof}

\begin{remark}
The overcount comes from replacing the Gaussian binomial coefficient by the crude bound $2^{r(n-r)+r}$ and from counting all alternating forms on $V\backslash R$, rather than only the non-degenerate ones.
\end{remark}

\begin{proposition}
\label{uppertail}
For all $n$ and all integers $c\ge0$,
$d_n(n+c)\ \le\ C_\star\,2^{-c}.$
\end{proposition}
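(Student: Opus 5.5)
The plan is to apply the $L^2$--TV inequality~\eqref{L2bound} at time $t=n+c$, bound each nonzero eigenvalue with Lemma~\ref{qwbound}, and count the characters of each rank with Lemma~\ref{alt-count}. No use of the vanishing dichotomy or of Lemma~\ref{nonzero-count} should be needed in this regime: the crude count already wins once $t\ge n$.

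\emph{Step 1: characters are alternating forms.} First I will observe that $A\mapsto B_{q_A}$ is a bijection from $\F_2^m$ onto the alternating bilinear forms on $\F_2^n$. Indeed, $B_{q_A}(e_i,e_j)=q_A(e_i+e_j)=A_{ij}$ for $i\ne j$, and the diagonal vanishes, so the Gram matrix of $B_{q_A}$ is exactly the adjacency matrix of $H_A$. Hence the number of $A$ with $r(A)=r$ equals the number of zero-diagonal symmetric matrices of rank $r$. By Lemma~\ref{alt-count} this is at most $2^{r(n-r)+r+\binom r2}$. Moreover, $A\ne0$ forces $r(A)\ge2$.

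\emph{Step 2: bound the sum.} By Lemma~\ref{qwbound}, $\lambda_A^{2t}\le 2^{-r(A)t}$. Write $r=2j$ with $j\ge1$ and put $t=n+c$. The contribution of rank $r$ to $\sum_{A\ne0}\lambda_A^{2t}$ is then at most
$$
2^{\,r(n-r)+r+r(r-1)/2-r(n+c)}=2^{-r(r-1)/2-rc}=2^{-j(2j-1)-2jc}.
$$
Since $j\ge1$ and $c\ge0$, we have $2jc\ge 2c$. Summing over $j\ge1$ and extending the range to all $j\ge1$ (only nonnegative terms are added) gives
$$
\sum_{A\ne0}\lambda_A^{2(n+c)}\le 2^{-2c}\sum_{j\ge1}2^{-j(2j-1)}.
$$

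\emph{Step 3: conclude.} Inserting this into~\eqref{L2bound} gives $d_n(n+c)\le\frac12\bigl(\sum_{j\ge1}2^{-j(2j-1)}\bigr)^{1/2}2^{-c}=C_\star2^{-c}$, uniformly in $n$ and in the starting state, since the right side of~\eqref{L2bound} does not depend on $T_0$.

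The only real point to check is Step 1: the exponent arithmetic must cancel the $rn$ terms exactly. It does, because $r(n-r)+r+\binom r2-rn=-\binom r2$. The identification of $r(A)$ with the matrix rank of the adjacency matrix should also be stated explicitly, so that Lemma~\ref{alt-count} applies verbatim. No step looks like a genuine obstacle; the argument is a direct computation.
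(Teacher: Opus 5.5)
Your proposal is correct and follows the paper's proof essentially line by line. Both arguments combine the $L^2$--TV bound, the eigenvalue bound $|\lambda_A|\le 2^{-r(A)/2}$ from Lemma~\ref{qwbound}, and the rank count of Lemma~\ref{alt-count}; both then simplify the exponent to $-\binom{r}{2}-rc$ and use $2^{-rc}\le 2^{-2c}$ for $r\ge 2$. Your Step~1 additionally makes explicit that $r(A)$ is the rank of the adjacency matrix of $H_A$, an identification the paper uses only implicitly.
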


\begin{proof}
By~\eqref{L2bound} and Lemma~\ref{qwbound},
$$
  d_n(t)\le\frac12\sqrt{\sum_{A\neq 0} 2^{-r(A)t}}.
$$
Since $r(A)$ is the rank of an alternating form on $\F_2^n$, it is always even.
Group by rank $r=r(A)$ and apply Lemma~\ref{alt-count}:
$$
  \sum_{A\neq 0}2^{-r(A)t}
  \le
  \sum_{\substack{r\text{ even}\\r\ge 2}}
  2^{\,r(n-r)+r+\binom{r}{2}}\cdot 2^{-rt}.
$$
Let $t=n+c$.  The exponent of $2$ in the $r$-th term is
\begin{align*}
  r(n-r)+r+\binom{r}{2}-r(n+c)
  &= rn-r^2+r+\tfrac{r(r-1)}{2}-rn-rc\\
  &= -r^2+r+\tfrac{r^2-r}{2}-rc\\
  &= -\tfrac{r^2}{2}+\tfrac{r}{2}-rc.
\end{align*}
Hence
$$
  \sum_{A\neq 0}2^{-r(A)(n+c)}
  \le
  \sum_{\substack{r\text{ even}\\r\ge 2}}
  2^{-r^2/2+r/2-rc}.
$$
For $r\ge 2$ and $c\ge 0$ the exponent $-r^2/2+r/2-rc\le -r^2/2+r/2\le -1$.
So,
$$
  \sum_{A\neq 0}2^{-r(A)(n+c)}
  \le
  2^{-2c}\sum_{\substack{r\text{ even}\\r\ge 2}}2^{-r^2/2+r/2}
  =:C_0\,2^{-2c},
$$
where $C_0:=\sum_{r\ge 2,\,2|r}2^{-r(r-1)/2}=\sum_{j\ge1}2^{-j(2j-1)}<\infty$.
Therefore $d_n(n+c)\le \frac12\sqrt{C_0}\,2^{-c}=C_\star\,2^{-c}$.
\end{proof}

\begin{proposition}\label{prop:spectral-gap}
The inversion walk has absolute spectral gap $1-\max_{A\ne0}|\lambda_A|=\tfrac12$.
\end{proposition}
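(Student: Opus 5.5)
We need to show two things: $|\lambda_A|\le \tfrac12$ for every $A\ne0$, and some $A\ne 0$ attains $|\lambda_A|=\tfrac12$.

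\emph{Upper bound.} Fix $A\neq0$, so the graph $H_A$ has at least one edge $\{i,j\}$. The polar form $B_{q_A}$ has Gram matrix equal to the adjacency matrix of $H_A$ over $\F_2$, since
$$B_{q_A}(e_i,e_j)=q_A(e_i+e_j)+q_A(e_i)+q_A(e_j)+q_A(0)=A_{ij}.$$
Because $A\ne0$, this matrix is a nonzero alternating matrix. Its rank is therefore even and at least one, hence $r(A)\ge 2$. Lemma~\ref{qwbound} then gives $|\lambda_A|\le 2^{-r(A)/2}\le 2^{-1}$.

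\emph{Attainment.} Let $A$ be the single edge $\{1,2\}$. Then $q_A(x)=x_1x_2$, and $(-1)^{x_1x_2}$ averages to $(1+1+1-1)/4=\tfrac12$ over $(x_1,x_2)$. The remaining coordinates do not appear, so $\lambda_A=\tfrac12$. Equivalently, from the walk: a uniform subset contains both $1$ and $2$ with probability $\tfrac14$, so $\lambda_A=1-2\cdot\tfrac14=\tfrac12$.

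To get the absolute gap, I also check that no eigenvalue is close to $-1$. This is automatic, since $|\lambda_A|\le \tfrac12$ covers negative eigenvalues as well. Hence $\max_{A\ne0}|\lambda_A|=\tfrac12$, and the absolute spectral gap is $\tfrac12$.

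There is no real obstacle here. The only point needing care is identifying the polar form of $q_A$ with the adjacency matrix of $H_A$, so that $A\ne0$ forces $r(A)\ge2$; this follows directly from the displayed computation of $B_{q_A}(e_i,e_j)$.
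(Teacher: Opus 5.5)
Your proposal is correct and follows the paper's proof essentially verbatim: a nonzero $A$ gives a nonzero alternating polar form of rank $r(A)\ge2$, so Lemma~\ref{qwbound} yields $|\lambda_A|\le\tfrac12$, with equality for the single edge $A=\{\{1,2\}\}$ where $q_A(x)=x_1x_2$. Your explicit identification of the Gram matrix of $B_{q_A}$ with the adjacency matrix of $H_A$ fills in a detail the paper states without computation.
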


\begin{proof}
For every non-zero $A$, the polar form $B_{q_A}$ is a non-zero alternating form and therefore
has rank at least $2$.  Lemma~\ref{qwbound} gives $|\lambda_A|\le 2^{-1}$.  Equality is
attained when $A$ is a single edge, say $A=\{\{1,2\}\}$, because then
$q_A(x)=x_1x_2$ and $\lambda_A=\E_{x\in\F_2^n}(-1)^{x_1x_2}=\frac{3-1}{4}=\frac12$.
Thus $\max_{A\ne0}|\lambda_A|=1/2$.
\end{proof}
\section{Lower tail: inversion balls and a sharp rank-counting inequality}
\label{s:balls}

Let $\Sym_n(\F_2)$ denote the set of all symmetric $n\times n$ matrices over $\F_2$.
We have $|\Sym_n(\F_2)|=2^{m+n}$ since such a matrix has $m=\binom{n}{2}$ strictly
upper-triangular entries and $n$ diagonal entries.

\begin{lemma}\label{rktail}
Let $M$ be uniformly random in $\Sym_n(\F_2)$. For every integer $s\in\{0,1,\dots,n\}$,
$$
  \Pr\big(\rank_{\F_2}(M)\le n-s\big)\ \le\ \kappa\,2^{-\binom{s+1}{2}}.
$$
\end{lemma}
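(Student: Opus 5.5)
The plan is a first-moment (union bound) argument over candidate kernels. If $\rank_{\F_2}(M)\le n-s$, then $\ker M$ has dimension at least $s$, so it contains at least one $s$-dimensional subspace $W\le\F_2^n$. Hence
$$
  \Pr\big(\rank M\le n-s\big)\ \le\ \sum_{\dim W=s}\Pr\big(W\subseteq\ker M\big),
$$
and it suffices to compute the summand exactly and bound the number of summands. The case $s=0$ is trivial since $\kappa\ge1$, so assume $s\ge1$.

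\emph{Step 1: the probability for a fixed $W$.} Because $\mathrm{GL}_n(\F_2)$ acts on $\Sym_n(\F_2)$ by congruence $M\mapsto P^\top MP$, and this preserves the uniform measure and maps kernels to kernels, the probability depends only on $\dim W$. We may therefore take $W=\langle e_1,\dots,e_s\rangle$. The event $W\subseteq\ker M$ says that the first $s$ columns of $M$ vanish, and then, by symmetry, so do the first $s$ rows. The free entries of a symmetric matrix are those $M_{ij}$ with $i\le j$. Among these, the ones forced to be $0$ are those with $i\le s$, and there are
$$
  sn-\binom{s}{2}
$$
of them. Since the free entries are independent uniform bits,
$$
  \Pr\big(W\subseteq\ker M\big)=2^{-sn+\binom s2}.
$$

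\emph{Step 2: counting subspaces.} By the product formula already used in Lemma~\ref{alt-count},
$$
  \binom{n}{s}_2=\prod_{i=0}^{s-1}\frac{2^{n-i}-1}{2^{s-i}-1}\le\prod_{i=0}^{s-1}\frac{2^{n-i}}{2^{s-i}(1-2^{-(s-i)})}=2^{s(n-s)}\prod_{j=1}^{s}(1-2^{-j})^{-1}\le\kappa\,2^{s(n-s)}.
$$

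\emph{Step 3: combine.} Multiplying the two bounds gives
$$
  \Pr\big(\rank M\le n-s\big)\le\kappa\,2^{\,s(n-s)-sn+\binom s2}=\kappa\,2^{-s^2+s(s-1)/2}=\kappa\,2^{-\binom{s+1}{2}},
$$
which is the claim. The only point needing care is the exact count $sn-\binom s2$ of constrained entries: the $s\times s$ top-left block must be counted once via its upper triangle including the diagonal, and diagonal entries are included since $M$ ranges over all of $\Sym_n(\F_2)$. Everything else is routine, and the first-moment loss is absorbed entirely into the constant $\kappa$.
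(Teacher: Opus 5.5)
Your proof is correct and is essentially the paper's argument. The paper also takes a union bound over $s$-dimensional subspaces $R\subseteq\rad(B)=\ker M$ and bounds $\binom{n}{s}_2\le\kappa\,2^{s(n-s)}$ by the same product estimate. It counts the forms with $R\subseteq\rad(B)$ as the $2^{\binom{n-s+1}{2}}$ symmetric forms on $V/R$, which is exactly your $2^{\binom{n+1}{2}-sn+\binom{s}{2}}$. You obtain the same number by normalising $W$ to $\langle e_1,\dots,e_s\rangle$ via congruence and counting forced-zero entries, rather than passing to the quotient.
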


\begin{proof}
It is convenient to view a symmetric matrix as a symmetric bilinear form $B$ on
$V=\F_2^n$.  For $0\le r\le n$, we count forms of rank at most $r$.
If $\rank(B)\le r$, then its radical contains at least one subspace $R\le V$ of dimension
$n-r$.  We first choose such an $R$ and then overcount the possible forms whose radical
contains $R$.

The number of choices for $R$ is the Gaussian binomial coefficient $\binom{n}{r}_2$.
Once $R$ is fixed, the condition $R\subseteq\rad(B)$ is equivalent to saying that $B$
descends to a symmetric bilinear form on the quotient $V/R$, which has dimension $r$.
The number of symmetric bilinear forms on an $r$-dimensional $\F_2$-space is
$2^{r(r+1)/2}$.  Hence
$$
  \big|\{M\in\Sym_n(\F_2):\rank(M)\le r\}\big|
  \le \binom{n}{r}_2\,2^{r(r+1)/2}.
$$
Using
$$
  \binom{n}{r}_2
  =2^{r(n-r)}\prod_{i=0}^{r-1}\frac{1-2^{i-n}}{1-2^{i-r}}
  \le \kappa\,2^{r(n-r)},
$$
by the definition of $\kappa$, we get
$$
  \Pr(\rank(M)\le r)
  \le \kappa\,2^{r(n-r)+r(r+1)/2-n(n+1)/2}.
$$
Substituting $r=n-s$ gives
$$
  r(n-r)+\frac{r(r+1)}2-\frac{n(n+1)}2=-\frac{s(s+1)}2=-\binom{s+1}{2},
$$
which proves the claim.
\end{proof}

The following proposition translates inversion-ball volume into a statement about low-rank
symmetric matrices, using the group-algebra structure.

\begin{proposition}
\label{ballvol}
Fix a tournament $T_0$ on $[n]$ and write $m=\binom{n}{2}$.
For every integer $s\in\{0,1,\dots,n\}$,
$$
  |B_{n-s}(T_0)|\ \le\ \kappa\,2^{\,m+n-\binom{s+1}{2}}.
$$
Consequently, for the uniform distribution $\pi$ on tournaments,
$$
  \pi(B_{n-s}(T_0))\ \le\ \kappa\,2^{\,n-\binom{s+1}{2}}.
$$
\end{proposition}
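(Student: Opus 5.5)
By Proposition~\ref{gram-representation}, every tournament in $B_{n-s}(T_0)$ is reached by some sequence of $t\le n-s$ inversions. The plan is to encode each such sequence by a symmetric matrix of rank at most $n-s$, so that $z(T')-z(T_0)$ is the off-diagonal part of that matrix, and then count those matrices using Lemma~\ref{rktail}.

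The first step is to fix $T'\in B_{n-s}(T_0)$ together with sets $X_1,\dots,X_t$, where $t\le n-s$ and $z(T')-z(T_0)=\sum_\ell v_{X_\ell}$. If $t<n-s$, we pad with empty sets; this does not change the sum, since $v_\emptyset=0$, so we may take $t=n-s$ exactly. Let $M\in\F_2^{(n-s)\times n}$ have rows $\mathbf 1_{X_\ell}$. By Proposition~\ref{gram-representation},
\[
z(T')-z(T_0)=\operatorname{offdiag}(M^\top M).
\]
The matrix $S:=M^\top M$ lies in $\Sym_n(\F_2)$ and has $\rank(S)\le\rank(M)\le n-s$.

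The second step is to consider the map $\Phi:\Sym_n(\F_2)\to\F_2^m$ that sends a symmetric matrix to its strict upper-triangular part. We have just shown that $z(B_{n-s}(T_0))-z(T_0)$ is contained in $\Phi(\mathcal R)$, where
\[
\mathcal R:=\{S\in\Sym_n(\F_2):\rank(S)\le n-s\}.
\]
Since $z$ is a bijection, this gives
\[
|B_{n-s}(T_0)|\le|\Phi(\mathcal R)|\le|\mathcal R|.
\]
Lemma~\ref{rktail} states that $|\mathcal R|\le\kappa\,2^{-\binom{s+1}{2}}\,|\Sym_n(\F_2)|$. Since $|\Sym_n(\F_2)|=2^{m+n}$, this is exactly $\kappa\,2^{m+n-\binom{s+1}{2}}$. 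Dividing by $2^m$ then gives the bound $\pi(B_{n-s}(T_0))\le\kappa\,2^{n-\binom{s+1}{2}}$.

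There is no serious obstacle. The points to check are these:
\begin{enumerate}[label=\textup{(\alph*)},leftmargin=2.2em]
\item Repetitions are allowed in the representation of the distance, so padding and reordering are harmless.
\item The rank bound holds over $\F_2$, since $\rank(M^\top M)\le\rank(M)\le t$.
\item The map $\Phi$ forgets the diagonal. This can only shrink the image, so the inequality $|\Phi(\mathcal R)|\le|\mathcal R|$ remains valid.
\end{enumerate}
The resulting bound loses a factor of $2^n$ because the diagonal is overcounted. This factor appears as $2^n$ in the final estimate and is harmless at the threshold $\binom{s+1}{2}\approx n$.
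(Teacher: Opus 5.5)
Your proposal is correct and is essentially the paper's argument. Your $M^\top M=\sum_\ell \mathbf 1_{X_\ell}\mathbf 1_{X_\ell}^\top$ is exactly the paper's matrix $D$, and your containment $z(B_{n-s}(T_0))-z(T_0)\subseteq\Phi(\mathcal R)$ is a slightly cleaner packaging of the paper's injection $T\mapsto M_{T_0}+M_T=D$ into the rank-$\le n-s$ symmetric matrices; both arguments then conclude with the same appeal to Lemma~\ref{rktail}.
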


\begin{proof}
We embed tournaments into $\Sym_n(\F_2)$ and use the rank-subadditivity of sums of
rank-$1$ matrices.

Choose any symmetric matrix $M_{T_0}\in\Sym_n(\F_2)$ whose strict upper-triangular part
encodes $T_0$ (the diagonal of $M_{T_0}$ is arbitrary; fix it once and for all, say as zero).
For any subset $X\subseteq[n]$, the matrix
$$
  M_X:=\mathbf{1}_X\mathbf{1}_X^\top\in\Sym_n(\F_2)
$$
is the all-ones matrix on the $X\times X$ block and zero elsewhere, and has rank at most $1$.

Let $T\in B_{n-s}(T_0)$.
By definition, there exists a sequence $X_1,\dots,X_t$ with $t\le n-s$ such that
$z(T)=z(T_0)+\sum_{i=1}^t v_{X_i}$ in $G=\F_2^m$. Fix one such sequence $X_1,\dots,X_t$ for each $T\in B_{n-s}(T_0)$.
All objects below are defined with respect to this choice.
Consider the symmetric matrix
$$
  D:=\sum_{i=1}^t M_{X_i}\;\in\Sym_n(\F_2).
$$
The strict upper-triangular part of $D$ coincides with $v_{X_1}+\cdots+v_{X_t}=z(T)-z(T_0)$,
which is the strict upper-triangular part of $M_T-M_{T_0}$.
We therefore \emph{define} the diagonal of $M_T$ so that $M_T-M_{T_0}=D$, i.e.,
$$
  \mathrm{diag}(M_T):=\mathrm{diag}(M_{T_0})+\mathrm{diag}(D)\pmod{2}.
$$
This choice of diagonal is unique given $D$ and $M_{T_0}$, and it ensures
$M_{T_0}+M_T=D$ in $\Sym_n(\F_2)$.

By subadditivity of rank and $\rank(M_{X_i})\le 1$,
$$
  \rank(M_{T_0}+M_T)=\rank(D)\le\sum_{i=1}^t\rank(M_{X_i})\le t\le n-s.
$$

The map $T\mapsto M_T$ defined above is injective: two tournaments $T\neq T'$ differ on some
edge $\{a,b\}$, so the $(a,b)$-entry of $M_T$ and $M_{T'}$ differ, hence $M_T\neq M_{T'}$.
Consequently, $T\mapsto M_{T_0}+M_T$ is also injective (since $M_{T_0}$ is fixed), and the
image is contained in $\{N\in\Sym_n(\F_2):\rank(N)\le n-s\}$.

Therefore
$$
\begin{aligned}
  |B_{n-s}(T_0)|
  &\le\big|\{N\in\Sym_n(\F_2):\rank(N)\le n-s\}\big|\\
  &=|\Sym_n(\F_2)|\cdot\Pr(\rank(M)\le n-s)\\
  &\le \kappa\,2^{m+n-\binom{s+1}{2}},
\end{aligned}
$$
using Lemma~\ref{rktail} with $M$ uniform on $\Sym_n(\F_2)$.
Dividing by the number of tournaments $2^m$ gives the bound on $\pi$.
\end{proof}

\begin{corollary}
\label{lowertail}
For the inversion walk $W_n$ and every integer $s\in\{0,1,\dots,n\}$,
$$
  d_n(n-s)\ \ge\ 1-\kappa\,2^{\,n-\binom{s+1}{2}}.
$$
In particular, for every fixed $a>\sqrt2$,
$d_n\!\left(n-\lfloor a\sqrt n\rfloor\right)\longrightarrow1.$
More generally, for every integer sequence $s_n$ such that
$\binom{s_n+1}{2}-n\longrightarrow +\infty$,
one has
$d_n(n-s_n)\longrightarrow 1.$
For fixed $\eta\in(0,1)$, the explicit lower-tail guarantee $d_n(n-s)\ge 1-\eta$ holds whenever
$$
  s\ge \left\lceil\frac{-1+\sqrt{1+8(n+\log_2(\kappa/\eta))}}2\right\rceil.
$$
\end{corollary}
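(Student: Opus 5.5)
The plan is to use the distinguishing-set form of total variation together with the support obstruction from Proposition~\ref{ballvol}. Fix an arbitrary starting tournament $T_0$ and set $t=n-s$. After $t$ steps the chain is supported on the inversion ball: each step adds some $v_{X_i}$, so $z(W_n(t))-z(T_0)$ is a sum of at most $t$ clique vectors. Steps with $|X_i|\le1$ contribute $0$ and can simply be dropped, which only shortens the representation. Hence $\mu_t(B_t(T_0))=1$.

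Take the event $E:=B_{n-s}(T_0)$ in the definition of total variation. This gives
$$
  \|\mu_t-\pi\|_{\TV}\ \ge\ \mu_t(E)-\pi(E)\ =\ 1-\pi(B_{n-s}(T_0)).
$$
By Proposition~\ref{ballvol} the right-hand side is at least $1-\kappa\,2^{\,n-\binom{s+1}{2}}$. Since $d_n$ is a maximum over starting states, the same bound holds for $d_n(n-s)$. This proves the main inequality.

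The consequences follow directly.
\begin{itemize}[leftmargin=2em]
\item If $\binom{s_n+1}{2}-n\to+\infty$, then $\kappa 2^{\,n-\binom{s_n+1}{2}}\to0$. Since also $d_n\le1$, we get $d_n(n-s_n)\to1$.
\item For fixed $a>\sqrt2$, put $s_n=\lfloor a\sqrt n\rfloor$. Once $n$ is large enough that $s_n\le n$, we have $\binom{s_n+1}{2}-n=(a^2/2-1)n+O(\sqrt n)\to+\infty$, so the previous item applies.
\item For the explicit threshold, we need $\kappa 2^{\,n-\binom{s+1}{2}}\le\eta$. This is equivalent to $s(s+1)/2\ge n+\log_2(\kappa/\eta)$. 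Solving the quadratic, it holds exactly when $s\ge\frac{-1+\sqrt{1+8(n+\log_2(\kappa/\eta))}}{2}$, and since $s$ is an integer we may take the ceiling. One should add the implicit requirement $s\le n$ so that Proposition~\ref{ballvol} applies.
\end{itemize}

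No real obstacle remains, since the volume estimate is already proved. The only point needing care is that $B_t$ is defined by "at most $t$" inversions, so trivial steps cause no issue.
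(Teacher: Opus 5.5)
Your proposal is correct and follows the paper's proof: you use the support obstruction $\mu_t(B_t(T_0))=1$, take the event $B_{n-s}(T_0)$ in the total-variation definition, and then apply Proposition~\ref{ballvol}. The consequences are derived the same way the paper derives them, with your added checks ($s_n\le n$ for large $n$, and the implicit requirement $s\le n$ in the explicit threshold) being harmless refinements.
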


\begin{proof}
Fix any initial tournament $T_0$.
After $t=n-s$ steps the chain is supported on $B_t(T_0)$, so
$\mu_t(B_t(T_0))=1$, and therefore
$$
  \|\mu_t-\pi\|_{\TV}\ge\mu_t(B_t(T_0))-\pi(B_t(T_0))=1-\pi(B_{n-s}(T_0)).
$$
The first claim follows from Proposition~\ref{ballvol}.  If
$\binom{s_n+1}{2}-n\to+\infty$, then $\kappa\,2^{n-\binom{s_n+1}{2}}\to0$, giving the
second claim.  The last display is obtained by solving
$\kappa\,2^{n-\binom{s+1}{2}}\le\eta$ for $s$.
\end{proof}

\section{Proof of the triangular cutoff threshold}
\label{s:pf-cutoff}

\begin{definition}[Total-variation cutoff window; {\cite{AldousDiaconis1986,Diaconis1996}}]
A sequence of Markov chains with worst-case distances $d_n(t)$ has cutoff at times $t_n$
with window $w_n=o(t_n)$ if, for every fixed $\delta>0$,
$$
  \lim_{n\to\infty} d_n(t_n-\delta w_n)=1,
  \qquad
  \lim_{n\to\infty} d_n(t_n+\delta w_n)=0.
$$
\end{definition}
For this discrete-time chain, non-integer times in a window statement are rounded to the
nearest integer; this changes the displayed offsets by at most $O(1)$ and does not affect any
of the limits below.

\begin{proof}[Proof of Theorem~\ref{mainthm}]
Item~\textup{(i)} is Corollary~\ref{lowertail}, item~\textup{(ii)} is
Proposition~\ref{preupper}, and item~\textup{(iii)} is Proposition~\ref{uppertail}.

Taking $s_n=\lfloor a\sqrt n\rfloor$ gives
$$
  \binom{s_n+1}{2}-n=(a^2/2-1)n+O(\sqrt n),
$$
which tends to $+\infty$ for $a>\sqrt2$ and to $-\infty$ for $0\le a<\sqrt2$.  This proves
the displayed fixed-$a$ transition.

Now let
$\sigma_n:=\max\{s:\ \binom{s+1}{2}\le n\}.$
For $s=\sigma_n-1$ we have
$$
  \binom{s+1}{2}-n=\binom{\sigma_n}{2}-n\le -\sigma_n\to-\infty,
$$
so item~\textup{(ii)} gives $d_n(n-\sigma_n+1)\to0$.  Hence
$t_{\mix}^{(n)}(\varepsilon)\le n-\sigma_n+1$ for all sufficiently large $n$.
For $s=\sigma_n+2$, the definition of $\sigma_n$ implies
$n\le\binom{\sigma_n+2}{2}-1$, and therefore
$$
  \binom{s+1}{2}-n
  =\binom{\sigma_n+3}{2}-n
  \ge \sigma_n+2\to+\infty.
$$
By item~\textup{(i)}, $d_n(n-\sigma_n-2)\to1$, so
$t_{\mix}^{(n)}(\varepsilon)\ge n-\sigma_n-1$ for all sufficiently large $n$.
This proves the stated two-sided localization.  Since
$\sigma_n=\lfloor(\sqrt{8n+1}-1)/2\rfloor=\sqrt{2n}+O(1)$, it also gives
$t_{\mix}^{(n)}(\varepsilon)=n-\sqrt{2n}+O(1)$.

It remains only to justify the final cutoff-window sentence.  Let $w_n\to\infty$ with
$w_n=o(n)$ and fix $\delta>0$.  Let $q_n=\lfloor\delta w_n\rfloor$.  At the earlier time
$n-\sigma_n-q_n$, the corresponding parameter is $s=\sigma_n+q_n$, and
$\binom{s+1}{2}-n\to+\infty$; hence item~\textup{(i)} gives distance tending to $1$.
At the later time $n-\sigma_n+q_n$, either $q_n>\sigma_n$, in which case this time is at
least $n$ and the conclusion follows from $d_n(n)\to0$ and monotonicity, or else
$s=\sigma_n-q_n\ge0$ and
$$
  \binom{s+1}{2}-n
  \le \binom{\sigma_n-q_n+1}{2}-\binom{\sigma_n+1}{2}
  =-q_n\sigma_n+\binom{q_n}{2}\to-\infty.
$$
Then item~\textup{(ii)} gives distance tending to $0$.
\end{proof}

\section{Restricted inversions}
\label{s:restr}

Fix $k\in\{0,1,\dots,n\}$.
The \emph{$k$-restricted inversion walk} $W_{n,k}$ chooses a uniformly random $k$-subset
$X\subseteq[n]$ and inverts $X$.
In the group encoding, $W_{n,k}$ is a Cayley walk on $G=\F_2^m$ driven by
$$
  \mathcal{S}_k:=\{v_X:\ X\subseteq[n],\ |X|=k\},
  \qquad
  H_k:=\langle \mathcal{S}_k\rangle \le G.
$$
Let $V_k$ denote the right-hand side subspace in Theorem~\ref{Hk-thm} (depending on $k\bmod 4$).
The chain is irreducible on cosets of $H_k$, with uniform stationary distribution on each such coset.

\subsection{Parity invariants}
\label{ss:parity}

Identify $G=\F_2^m$ with the family of edge-subsets $F\subseteq\binom{[n]}{2}$ under
symmetric difference.
Define two $\F_2$-linear functionals, the degree-parity map and the edge-count parity:
$$
  \partial:\F_2^m\to\F_2^n,\quad \partial(F)_v:=\deg_F(v)\pmod 2,
  \qquad
  e:\F_2^m\to\F_2,\quad e(F):=|F|\pmod 2.
$$
We compute these on generators.
For a $k$-clique $K_X$ on $X\subseteq[n]$, $\deg_{K_X}(v)=k-1$ if $v\in X$ and $0$
otherwise, so $\partial(K_X)=(k-1)\mathbf{1}_X\pmod 2$.
Hence:
\begin{itemize}
  \item If $k$ is odd, then $k-1$ is even, so $\partial(K_X)=0$ for all $X$; thus
        $H_k\subseteq\ker(\partial)$.
  \item If $k$ is even, then $\partial(K_X)=\mathbf{1}_X\not\equiv 0$; the obstruction
        $\ker(\partial)$ is not automatic.
\end{itemize}
Also, $e(K_X)=\binom{k}{2}\pmod 2$.
Since $\binom{k}{2}=k(k-1)/2$:
\begin{itemize}
  \item $\binom{k}{2}\equiv 0\pmod 2$ iff $k\equiv 0$ or $1\pmod 4$; in these cases
        $e(K_X)=0$ for all $X$ and $H_k\subseteq\ker(e)$.
  \item $\binom{k}{2}\equiv 1\pmod 2$ iff $k\equiv 2$ or $3\pmod 4$; the edge-count
        obstruction does not apply.
\end{itemize}

Combining: $H_k\subseteq\ker(\partial)\cap\ker(e)$ when $k\equiv 1\pmod 4$;
$H_k\subseteq\ker(\partial)$ (but not necessarily $\ker(e)$) when $k\equiv 3\pmod 4$;
$H_k\subseteq\ker(e)$ when $k\equiv 0\pmod 4$; and no obstruction when $k\equiv 2\pmod 4$.
The content of Theorem~\ref{Hk-thm} is that these inclusions are equalities.

\subsection{Boundary cases}
\label{degen}

\paragraph{\texorpdfstring{$k=0$ or $k=1$.}{k = 0 or k = 1.}}
$v_X=0$ for all $X$ with $|X|\le 1$, so $H_k=\{0\}$ and the chain does not move.

\paragraph{\texorpdfstring{$k=n$.}{k = n.}}
There is only one $n$-subset, namely $[n]$, so $H_n=\langle v_{[n]}\rangle$ has size $2$.
Starting from $T_0$, the chain alternates between $T_0$ and its complete reversal $T_0^*$
(the tournament obtained by reversing all arcs).

\paragraph{\texorpdfstring{$k=n-1$.}{k = n - 1.}}
Assume here that $n\ge3$; for $n\le2$ this overlaps with the already degenerate cases
$k\le1$.  The generators are $\{v_{[n]\setminus\{i\}}:i\in[n]\}$.  We claim:
\begin{itemize}
  \item If $n$ is odd: the $n$ generators are linearly independent, so $\dim H_{n-1}=n$.
  \item If $n$ is even: the $n$ generators span a space of dimension $n-1$; there is exactly
        one linear relation, $\sum_{i=1}^n v_{[n]\setminus\{i\}}=0$.
\end{itemize}
\begin{proof}
Write $u_i:=v_{[n]\setminus\{i\}}$. Edge $\{a,b\}$ contributes to $u_i$ iff $i\neq a,b$,
so $\{a,b\}$ appears in $\sum_{i\in S}u_i$ with coefficient $|S\setminus\{a,b\}|\pmod 2$.

For the full sum $\sum_{i=1}^n u_i$: edge $\{a,b\}$ appears $n-2$ times.
Thus $\sum u_i = 0$ in $\F_2^m$ iff $n$ is even; if $n$ is odd the sum is non-zero.

For a proper non-empty sub-sum $\sum_{i\in S}u_i$ with $\emptyset\neq S\subsetneq[n]$:
it suffices to find an edge $\{a,b\}$ for which the coefficient $|S\setminus\{a,b\}|$ is odd.
Such an edge always exists:
if $|S|$ is even, pick $a\in S$ and $b\notin S$, so $|S\setminus\{a,b\}|=|S|-1$ is odd;
if $|S|$ is odd and $|[n]\setminus S|\ge 2$, pick $\{a,b\}\subseteq [n]\setminus S$,
so $|S\setminus\{a,b\}|=|S|$ is odd;
if $|S|$ is odd and $|[n]\setminus S|=1$, pick $\{a,b\}\subseteq S$,
so $|S\setminus\{a,b\}|=|S|-2$ is odd.
Hence some edge has coefficient $1$ and every proper non-empty sub-sum is non-zero.

Hence, for odd $n$ there is no linear relation among the $n$ generators, giving
$\dim H_{n-1}=n$; for even $n$ the unique minimal relation is the full sum, giving
$\dim H_{n-1}=n-1$.
\end{proof}

In both cases the state space of $W_{n,n-1}$ is much smaller than $\F_2^m$ (it has
dimension at most $n$, compared to $m=\binom{n}{2}$).  This boundary case is therefore
qualitatively different from the main regime; as for $k=2$, any convergence statement for the
non-lazy walk must also account for possible period-two obstructions.

\subsection{The main regime \texorpdfstring{$2\le k\le n-2$}{2 <= k <= n - 2}}
\label{ss:main-reg}

The proof of Theorem~\ref{Hk-thm} proceeds by comparing dimensions.
The key tool is Wilson's diagonal form for inclusion matrices.

\begin{lemma}[Wilson~\cite{Wilson1990}; see also Jolliffe~\cite{Jolliffe2024}]
\label{wilson}
Let $W_{2,k}(n)$ be the $0/1$ matrix with rows indexed by $2$-subsets and columns by
$k$-subsets of $[n]$, with entry $1$ iff the $2$-subset is contained in the $k$-subset.
Then over $\F_2$,
$$
  \rank_{\F_2} W_{2,k}(n)
  =
  \sum_{\substack{j\in\{0,1,2\}\\2\nmid \binom{k-j}{2-j}}}
  \left(\binom{n}{j}-\binom{n}{j-1}\right),
  \qquad
  \binom{n}{-1}:=0.
$$
\end{lemma}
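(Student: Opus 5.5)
We would prove the rank formula by exhibiting enough of Wilson's diagonal form for the case $t=2$. The plan is to show that there are integer unimodular matrices $E,F$ with
$$
  E\,W_{2,k}(n)\,F=\bigl[\,D\ \ 0\,\bigr],
$$
where $D$ is diagonal and the entry $\binom{k-j}{2-j}$ appears with multiplicity $\binom{n}{j}-\binom{n}{j-1}$ for each $j\in\{0,1,2\}$. These multiplicities sum to $\binom n2$, the number of rows. The hypothesis $2\le k\le n-2$ gives $2\le\min(k,n-k)$, which is Wilson's standing assumption. Once such a form exists, reduction mod $2$ preserves unimodularity, so $\rank_{\F_2}W_{2,k}(n)$ is the number of diagonal entries that are odd. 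This is exactly the displayed sum over $j$ with $2\nmid\binom{k-j}{2-j}$.

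\textbf{Step 1 (filtration).} We filter $\mathbb Z^{\binom{[n]}{2}}$ by the images of $W_{j,2}^{\top}$ for $j=0,1,2$. Here $W_{j,2}$ is the $j$-subset versus $2$-subset inclusion matrix. The basic identity driving everything is
$$
  W_{j,2}\,W_{2,k}=\tbinom{k-j}{2-j}\,W_{j,k},
$$
which holds because a $j$-set $J\subseteq K$ lies in exactly $\binom{k-j}{2-j}$ two-sets $P$ with $J\subseteq P\subseteq K$.

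\textbf{Step 2 (integral bases).} We choose for each $j$ an explicit family $\mathcal B_j$ of $j$-subsets of size $\binom nj-\binom n{j-1}$. We would take Frankl's ``non-neighbouring'' sets: $\{a_1<\dots<a_j\}$ with $a_i\ge 2i$. The claim to verify is that the rows of $W_{j,2}$ indexed by $\mathcal B_j$, together with those of the lower levels, can be completed by unimodular row operations to a $\mathbb Z$-basis compatible with the filtration. Since the ranks involved are tiny ($j\le2$), we would do this by hand: level $0$ is the all-ones row; level $1$ consists of the vertex stars $\{i\}$ for $i\ge2$; level $2$ consists of the remaining edges.

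\textbf{Step 3 (columns).} On the column side we need $W_{j,k}$ restricted to $\mathcal B_j$ to have a right inverse over $\mathbb Z$ supported on suitable $k$-sets. Concretely, we would exhibit $k$-sets $K_B$, one for each $B\in\mathcal B_j$, such that the square submatrix $W_{j,k}[\mathcal B_j,\{K_B\}]$ is unitriangular. The greedy choice is to pair each $B$ with a $k$-set that contains $B$ and otherwise avoids the small labels. This uses $n-k\ge2$. Combining Steps 1--3 and the identity from Step 1 should block-triangularise $W_{2,k}$ with diagonal blocks $\binom{k-j}{2-j}I$. Column operations then clear the off-diagonal blocks and kill the remaining columns.

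\textbf{Main obstacle.} We expect the hardest part to be the unimodularity in Step 3: showing the chosen $k$-sets give unitriangular submatrices, and that the leftover columns are integer combinations of the chosen ones modulo the filtration. If the explicit triangular choice fails, the fallback is the purely mod-$2$ argument, which suffices for the stated $\F_2$ rank. For the upper bound, the identities $W_{j,2}W_{2,k}\equiv0\pmod 2$, taken for those $j$ with $\binom{k-j}{2-j}$ even, give left-kernel vectors of $W_{2,k}$. Their span has dimension computed from the ranks of $W_{j,2}$ over $\F_2$. For the lower bound, we would directly exhibit the matching number of independent columns $v_X$, using triangularity in a lexicographic order.
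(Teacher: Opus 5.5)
Your reduction is sound as far as it goes, but the step that carries all the content is missing, and the mechanism you propose for it cannot work. Note that the paper does not prove this lemma at all; it simply cites Wilson.

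\textbf{What is right.} Let $E$ have rows $\mathbf 1$, the stars $s_i$ ($i\ge2$), and the coordinate vectors of the pairs in $\mathcal B_2$. Then $E$ is indeed unimodular. Modulo the $\mathcal B_2$ coordinates, $s_i$ reduces to the coordinate vector of $\{1,i\}$ for $i\ge4$. What remains of $s_2,s_3,\mathbf 1$ on the pairs $12,13,23$ has determinant $-1$. Moreover $EW_{2,k}=DW'$, where $W'$ stacks the rows of $W_{j,k}$ indexed by $\mathcal B_j$, $j=0,1,2$.

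\textbf{The gap in Step 3.} Everything now rests on $W'$ having a right inverse over $\mathbb Z$. For the $\F_2$ statement alone, full row rank of $W'$ modulo $2$ suffices. Equivalently, the functions $K\mapsto[S\subseteq K]$, $S\in\mathcal B_0\cup\mathcal B_1\cup\mathcal B_2$, must be linearly independent on $k$-sets modulo every prime. That is the real content of Wilson's theorem, and Step 3 does not establish it.

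The unitriangular blocks you propose do not exist. A square $0/1$ matrix that is unitriangular after permuting rows and columns has a column with a single $1$.
\begin{itemize}
\item At level $1$, such a column would be a $k$-set meeting $\{2,\dots,n\}$ in one point, impossible for $k\ge3$.
\item At level $2$, it would be a $k$-set containing exactly one pair of $\mathcal B_2$. This is impossible for $k\ge4$: such a set has at least three points in $\{2,\dots,n\}$, hence at least three pairs there, and only $\{2,3\}$ is excluded from $\mathcal B_2$.
\end{itemize}
Nor is the stacked square matrix block triangular in either direction. The level-$0$ row is all ones, and every $k$-set meets $\{2,\dots,n\}$. So ``column operations clear the off-diagonal blocks'' presupposes exactly the cross-level unimodularity that is at stake.

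\textbf{The fallback.} Its upper bound via left-kernel vectors is fine. For $k\equiv1\pmod4$ you should also note that $\mathbf 1$ is not a cut of $K_n$, so the two families are independent.

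The lower bound has the same defect as Step 3. For every even $k\ge4$ the target rank is $m$ or $m-1$. A triangular arrangement of that many columns $v_X$ against that many edges forces some $k$-clique to have at most two edges, but it has $\binom k2\ge6$. So no lexicographic ordering can help.

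\textbf{What does work for the $\F_2$ statement} is a spanning argument.
\begin{itemize}
\item For $|A|=k-1$ and $a,b\notin A$, $v_{A\cup\{a\}}+v_{A\cup\{b\}}$ is the complete bipartite graph between $\{a,b\}$ and $A$.
\item Repeat this with one vertex of $A$ swapped for an outside vertex, which is possible since $k\le n-2$. Adding the two gives any $4$-cycle.
\item $4$-cycles span $\ker(\partial)\cap\ker(e)$.
\item For even $k$, the bipartite pieces have even size and $\partial=\mathbf 1_{\{a,b\}}$. This upgrades the span to $\ker(e)$.
\item A single clique $v_X$ supplies the odd part when $\binom k2$ is odd.
\end{itemize}
Combined with your left-kernel vectors, this yields the rank formula for $2\le k\le n-2$. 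In fact it proves Theorem~\ref{Hk-thm} without Wilson.
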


Note that the column span of $W_{2,k}(n)$ over $\F_2$ is exactly $H_k$,
since each column is $v_X$ for some $k$-subset $X$, and the columns generate $H_k$.
Hence $\dim H_k=\rank_{\F_2} W_{2,k}(n)$.

We now evaluate the rank formula by case, determining which $j\in\{0,1,2\}$ contribute.

\begin{proof}[Proof of Theorem~\ref{Hk-thm}]

\noindent\textbf{Upper bound $H_k\subseteq V_k$.}
This was established in Section~\ref{ss:parity}.

We check which of the three terms $j=0,1,2$ contribute to the rank sum.

\begin{itemize}
\item \textbf{$j=2$:} $\binom{k-2}{0}=1$, which is odd.
  Contribution: $\binom{n}{2}-\binom{n}{1}=m-n$.

\item \textbf{$j=1$:} $\binom{k-1}{1}=k-1$, which is odd iff $k$ is even.
  Contribution when $k$ is even: $\binom{n}{1}-\binom{n}{0}=n-1$.

\item \textbf{$j=0$:} $\binom{k}{2}=k(k-1)/2$, which is odd iff $k\equiv 2$ or $3\pmod{4}$.
  Contribution: $\binom{n}{0}-0=1$.
\end{itemize}

Summing over the active terms:
$$
  \dim H_k=\rank_{\F_2} W_{2,k}(n)=
  \begin{cases}
  (m-n)+(n-1)+1=m  & k\equiv 2\pmod 4,\\
  m-n+1      & k\equiv 3\pmod 4,\\
  m-1        & k\equiv 0\pmod 4,\\
  m-n        & k\equiv 1\pmod 4.
  \end{cases}
$$

\medskip\noindent\textbf{Dimensions of $V_k$.}
The map $\partial:\F_2^m\to\F_2^n$ is the mod-$2$ vertex-edge incidence map of $K_n$.
Since $K_n$ is connected, the rank of this map (over $\F_2$) equals $n-1$ (Indeed, $\mathrm{Im}(\partial)=\{x\in\F_2^n:\langle x,\mathbf 1\rangle=0\}$ for connected graphs,
so $\mathrm{rank}(\partial)=n-1$).
Hence $\dim\ker(\partial)=m-(n-1)=m-n+1$.

The map $e:\F_2^m\to\F_2$ is non-trivial (a single edge has $e=1$), so $\dim\ker(e)=m-1$.

For the intersection: we need $e|_{\ker(\partial)}\neq 0$, i.e., there exists an edge-set
$F$ with $\partial(F)=0$ (all degrees even) and $|F|$ odd.
A triangle $\{a,b\},\{b,c\},\{a,c\}$ has all vertices of degree $2\equiv 0\pmod 2$ and
has $|F|=3$ (odd), so it lies in $\ker(\partial)\setminus\ker(e)$.
Therefore $e|_{\ker(\partial)}\neq 0$, and $\dim(\ker(\partial)\cap\ker(e))=m-n$.

The dimensions of $V_k$ and $H_k$ match in every case. Since $H_k\subseteq V_k$,
we conclude $H_k=V_k$.
\end{proof}

\subsection{The case \texorpdfstring{$k=2$}{k = 2}: the hypercube}
\label{ss:k2}

When $k=2$, each step flips exactly one uniformly random edge.
In the encoding $z(T)\in\F_2^m$, this is the simple random walk on the $m$-dimensional
hypercube $\{0,1\}^m$.  The non-lazy version has period $2$; we state mixing for the lazy
version $W_{n,2}^L$ (which remains at the current state with probability $1/2$).

\begin{proposition}[Hypercube mixing; {\cite[Ch.~18]{LevinPeres2017}}]
\label{hypercube}
The lazy $k=2$ walk $W_{n,2}^L$ has
$t_{\mix}(\varepsilon)=\frac{m}{2}\ln m+O(m)$ for every fixed $\varepsilon\in(0,1)$, and undergoes cutoff at time $\frac{m}{2}\ln m=\frac{1}{2}\binom{n}{2}\ln\binom{n}{2}$
with window $\Theta(m)=\Theta(n^2)$.
\end{proposition}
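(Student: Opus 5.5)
We plan to identify the lazy $k=2$ walk with the standard lazy random walk on the hypercube $\{0,1\}^m$, and then quote the classical cutoff result for that walk.

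\textbf{Step 1: the identification.} Under the encoding $T\mapsto z(T)$ of Section~\ref{s:setup}, inverting a $2$-set $X=\{i,j\}$ adds $v_X=e_{\{i,j\}}$. Hence each non-lazy step adds a uniformly random standard basis vector of $\F_2^m$. The lazy walk $W_{n,2}^L$ is therefore the Cayley walk on $\F_2^m$ with step distribution $\frac12\delta_0+\frac1{2m}\sum_{e}\delta_{e}$. This is exactly the lazy simple random walk on the $m$-dimensional hypercube as defined in \cite[Ch.~18]{LevinPeres2017}.

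\textbf{Step 2: importing the classical result.} For that walk, \cite{LevinPeres2017} gives $t_{\mix}(\varepsilon)=\frac m2\ln m+O_\varepsilon(m)$ and cutoff at $\frac m2\ln m$ with window $O(m)$. We will recall the two mechanisms behind it as a sanity check.

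\begin{itemize}
\item \emph{Upper bound.} Use the Fourier/$L^2$ method of \eqref{L2bound}. The eigenvalues are $1-|A|/m$ with multiplicity $\binom{m}{|A|}$. At $t=\frac m2\ln m+cm$, the bound $\sum_{j\ge1}\binom mj(1-j/m)^{2t}\le e^{me^{-2t/m}}-1\le e^{e^{-2c}}-1$ gives the upper estimate.
\item \emph{Lower bound.} Use Wilson's method, or a second-moment argument on the Hamming weight statistic $\sum_i(-1)^{z_i}$. Its mean is $me^{-t/m}$-order and its standard deviation is $\sqrt m$, which separates the two distributions until $t=\frac m2\ln m-cm$.
\end{itemize}

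\textbf{Step 3: conversion to $n$.} Substitute $m=\binom n2=\Theta(n^2)$. This gives a window of order $\Theta(m)=\Theta(n^2)$. The window is $o(\frac m2\ln m)$, so it qualifies as a cutoff window.

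\textbf{Main obstacle.} No step requires new work beyond the citation. The only care points concern laziness:
\begin{itemize}
\item The non-lazy walk has period two, so the statement must be made for the lazy version, as the proposition does.
\item The laziness convention (holding probability $1/2$) must match the cited source, so that the constant $\frac12$ in $\frac m2\ln m$ is correct.
\end{itemize}
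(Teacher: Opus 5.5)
Your proposal is correct and is essentially the paper's argument: the paper also observes that $v_{\{i,j\}}$ is a standard basis vector of $\mathbb{F}_2^m$, so the lazy $k=2$ walk is the lazy hypercube walk on $\{0,1\}^m$, and then simply quotes Levin--Peres; your $L^2$ and Hamming-weight sketches are accurate extras. One minor caveat for Step~3: the window $\Theta(m)$ is valid in Levin--Peres's sense (limits as $\alpha\to\infty$ taken after $n\to\infty$), but not under the paper's own cutoff-window definition, which demands the limits for every fixed $\delta>0$, because $d(\frac m2\ln m+cm)$ tends to a limit strictly inside $(0,1)$ for each fixed $c$.
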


Comparing with Theorem~\ref{mainthm}: the full inversion walk mixes at time $\Theta(n)$, a
substantial polynomial-factor improvement over the $\Theta(n^2\log n)$ mixing time for $k=2$.
This reflects the fact that a typical inversion by a $\Theta(n)$-sized set flips $\Theta(n^2)$
edges simultaneously, achieving in one step what the hypercube walk needs $\Theta(n^2)$
steps to accomplish.

\section{Discussion and open problems}
\label{s:open}

Theorem~\ref{mainthm} locates the cutoff for the inversion walk on tournaments at the
integer triangular threshold $n-\sigma_n+O(1)$, where
$\sigma_n=\max\{s:\binom{s+1}{2}\le n\}$.  This answers the mixing-time question posed
in~\cite[Section~8]{AlonPowierskiSaveryScottWilmer2024} with a bounded-window localization.
Theorem~\ref{Hk-thm} gives a complete structural description of the state space of the
$k$-restricted walk for $2\le k\le n-2$.

We collect several natural follow-up directions.

\begin{enumerate}[leftmargin=1.8em, label=(\arabic*)]

\item \textbf{The bounded central profile.}
Theorem~\ref{mainthm} gives the full triangular threshold away from the bounded central
regime: $d_n(n-s_n)\to1$ if $\binom{s_n+1}{2}-n\to+\infty$, and $d_n(n-s_n)\to0$ if
$\binom{s_n+1}{2}-n\to-\infty$.  The remaining natural question is the profile when
$\binom{s_n+1}{2}-n=O(1)$.  In that regime one should expect either a finite family of
subsequential limits, depending on the distance from $n$ to the nearest triangular number, or
a proof that no non-trivial limiting profile exists.

\item \textbf{Mixing of the restricted walk.}
For each fixed $k$, determine the mixing time and cutoff behaviour of $W_{n,k}$ on its state
space.

\item \textbf{Other digraph families.}
The inversion walk and its variants can be defined for general digraphs, not just tournaments.
For digraphs with repeated arcs or for orientations of non-complete graphs, the algebraic
structure is similar but the rank calculations may differ.  Extending the cutoff result to
these settings is a natural generalisation.

\end{enumerate}

\end{document}